\def\bptnote#1{}
\newcommand{\eqref}[1]{(\ref{#1})}
\newcommand{\R}{\mathbb R}
\newcommand{\N}{\mathbb N}
\newcommand{\bfr}{{r}}
\newcommand{\bfk}{{k}}
\newtheorem{theorem}{Theorem}[section]
\newtheorem{lemma}[theorem]{Lemma}
\begin{document}
\begin{frontmatter}

\title{Parameter estimation for rough differential equations\thanksref{T2}}
\runtitle{Parameter estimation for rough differential equations}

\thankstext{T2}{Supported in part by EP/H019588/1 Grant ``Parameter
Estimation for Rough Differential Equations with Applications to
Multiscale Modelling'' and FP7-REGPOT-2009-1 project
``Archimedes Center for Modeling, Analysis and Computation.''}

\begin{aug}
\author[A]{\fnms{Anastasia} \snm{Papavasiliou}\corref{}\ead[label=e1]{A.Papavasiliou@warwick.ac.uk}}
and
\author[B]{\fnms{Christophe} \snm{Ladroue}}
\runauthor{A. Papavasiliou and C. Ladroue}
\affiliation{University of Warwick and University of Crete, and
University of Bristol}
\address[A]{Department of Statistics\\
University of Warwick\\
CV4 7AL, Coventry\\
United Kingdom\\
and \\
Department of Applied Mathematics\\
University of Crete\\
P.O. Box 2208, 71409 Heraklion\\
Crete, Greece\\
\printead{e1}}
\address[B]{Bristol Genetic Epidemiology Laboratories\\
MRC Centre for Causal Analyses\\
\quad in Translational Epidemiology\\
School of Social and Community Medicine\\
University of Bristol\\
Bristol BS8 2BN\\
United Kingdom}
\end{aug}

\received{\smonth{6} \syear{2010}}
\revised{\smonth{1} \syear{2011}}

%
\begin{abstract}
We construct the ``expected signature matching'' estimator for
differential equations driven by rough paths and we prove its
consistency and asymptotic normality. We use it to estimate parameters
of a diffusion and a fractional diffusions, that is, a differential
equation driven by fractional Brownian motion.
\end{abstract}

%
\begin{keyword}[class=AMS]
\kwd[Primary ]{62F12}
\kwd{62F12}
\kwd[; secondary ]{62M99}.
\end{keyword}
\begin{keyword}
\kwd{Rough paths}
\kwd{diffusions}
\kwd{fractional diffusions}
\kwd{generalized moment matching}
\kwd{parameter estimation}.
\end{keyword}

\end{frontmatter}

\section{Introduction}

Statistical inference for stochastic processes is a huge field, both
in terms of research output and importance. In particular, a~lot of
work has been done in the context of diffusions (see
\cite{Rao,Kutoyants,Bishwal} for a~general overview and \cite{Gareth} for
some recent developments). Nevertheless, the problem of statistical
inference for diffusions still poses many challenges, as, for example,
constructing the Maximum Likelihood Estimator (MLE) for the general
multi-dimensional diffusion. An alternative method in this case is that
of the Generalized Moment Matching Estimator (GMME). While, in general,
less efficient compared to the MLE, the GMME is usually easier to use,
more flexible and has been successfully applied to general Markov~processes
(see~\cite{Ait-SahaliaMykland,Hansen-Scheinkman}).

On the other hand, most methods of statistical inference in the
context of non-Markovian continuous processes are restricted to
specific classes of models. In the case of differential equations
driven by fractional Brownian motion, some recent results can be found
in \cite{OUfBM,Bishwal,ViensTudor}. In \cite{Hult}, the author
discusses the problem of parameter estimation for differential
equations driven by Volterra type processes---which include fractional
Brownian motion. In all these papers, the analysis is restricted to
models that depend linearly on the parameter and for parameters
appearing in the drift. Finally, for non-Markovian processes coming
from stochastic delay equations, see \cite{Sorensen,Reiss}.

The theory of rough paths provides a general framework for making
sense of differential equations driven by any type of noise modelled as
a rough path---this includes diffusions, differential equations driven
by fractional Brownian motion, delay equations and even delay equation
driven by fractional Brownian motion (see \cite{Tindel}). The basic
ideas have been developed in the 90s (see \cite{Terrybook} and
references within). However, the problem of statistical inference for
differential equations driven by rough paths has not been addressed
yet. This is exactly what we strive to do in this paper.



The exact setting of the statistical problem we consider is the
following: \textit{we observe many independent copies of specific iterated
integrals of the response $\{ Y_t, 0<t<T\}$} of a differential equation
\[
dY_t = f(Y_t;\theta)\cdot dX_t, \qquad Y_0 = y_0,
\]
driven by the \textit{rough path} $X$. We will formally define what we
mean by a rough path and a differential equation driven by it in
Section \ref{introtoRDEs}. Two examples of interest are $X_t = (t,
W_t)$ where $W_t$ is Brownian motion and the differential equation is a
Stratonovich stochastic differential equation and $X_t = (t, B^H_t)$
where $B^H_t$ is fractional Brownian motion. \textit{The iterated
integrals are observed at a fixed time $T$}. In this sense, our setting
is similar to \cite{Gobet}. However, if the response lives in more than
one dimension, the iterated integrals\vspace*{1pt} could be functions of the whole
path. For example, suppose that $Y_t = (Y^{(1)}_t, Y^{(2)}_t)$ and we observe
\[
\int\int_{0<u_1<u_2<T} dY^{(2)}_{u_1}\,dY^{(1)}_{u_2}
\]
for fixed time $T$. We further assume that \textit{the vector field
$f(y;\theta)$ is polynomial in $y$ and depends on the unknown parameter
$\theta$}. Finally, we assume that \textit{we know the} expected
signature \textit{of the rough path $X$ on the interval $[0,T]$}, to be
formally defined later. For now, let's just say that it is the set of
all iterated integrals of $X$ and its expectation fully describes the
distribution of the rough path $X$.

The first assumption is a bit unusual: it is much more common to
assume that we observe one long path rather than many short ones. This
setting is chosen for two reasons. The first is its simplicity: we
develop here some basic tools for statistical inference of differential
equation driven by rough paths.

The second reason was that such settings arise in the context of
``equation-free'' modelling of multiscale models (see \cite
{Kevrekidis}). Suppose that we have access to some code that simulates
the dynamics of a complex system, such as molecular dynamics. We treat
the code as a ``black box.'' We are interested in the global
behavior\vadjust{\goodbreak}
of a function of our system that ``lives'' in the slow scale, that is,
in some limit its dynamics follow a diffusion, which is, however,
unknown. The basic idea of ``equation-free'' modelling is to run the
code for a \textit{short time} and use the output to \textit{locally
estimate} the parameters of the differential equation. This process is
repeated several times with carefully chosen initial conditions, so as
to get an estimate of the global dynamics. To summarize, in this problem:
\begin{longlist}[(c)]
\item[(a)] we observe many independent paths;
\item[(b)] time is short;
\item[(c)] we locally approximate the vector field by a polynomial.
\end{longlist}
Currently, the estimation is done using the MLE approach, pretending
that the data comes from the diffusion rather than the multiscale model
(see \cite{Calderon}). However, for short time $T$ we cannot expect the
diffusion approximation to be a good one. We believe that in the scale
of $T$, we can always approximate the dynamics by a differential
equation driven by a rough path (see \cite{me}).

However, the method can be generalized to other settings, such as
observing one continuous path, provided that some ergodicity conditions
are fulfilled. We also describe the methodology for this setting and
demonstrate it with an example. Note though that in the general setting
of rough paths, ergodicity theory has not yet been developed and has to
be checked for each case separately. For some recent results on the
ergodicity of differential equations driven by fractional Brownian
motion see \cite{Hairer}.

The structure of the paper is the following: we start by reviewing
some basic concepts and results from the theory of rough paths and we
give a~precise description of the problem we consider. In Section \ref{method}, we
describe the methodology. The idea is simple: we want to match the
theoretical and the expected signatures of the response. However, in
general we cannot expect to get an explicit formula for the theoretical
expected signature, so we construct an approximation of it. We go on to
give a precise definition of the ``expected signature matching
estimator'' using this approximation and prove its consistency and
asymptotic normality.

In Section \ref{sectionextensions}, we extend the methodology to the setting where we observe
one path of a stationary ergodic process and we discuss optimality.

In Section \ref{sec5}, we apply the methods to three examples that represent the
most common RDEs: diffusions and differential equations driven by
fractional Brownian motion. We have written a package in \textit{Mathematica} that is publicly available from
\href{http://chrisladroue.com/software/brownian-motion-and-iterated-integrals-on-mathematica/}{http://chrisladroue.com/software/brownian-motion-}
\href{http://chrisladroue.com/software/brownian-motion-and-iterated-integrals-on-mathematica/}{and-iterated-integrals-on-mathematica/}
and can be used to recreate the
examples we include in the paper or try out new ones.

\vspace*{-2pt}\section{Setting}\vspace*{-2pt}

\subsection{Some basic results from the theory of rough paths}
\label{introtoRDEs}

In this section, we review some of the basic results from the theory of
rough paths. For more details, see \cite{FrizVictoir,Lyons2007} and references\vadjust{\goodbreak}
within. The goal of this theory is to give meaning to the differential equation
%
\begin{equation}
\label{main}
dY_t = f(Y_t)\cdot dX_t,\qquad Y_0 = y_0,
\end{equation}
for very general continuous paths $X$. More specifically, we think of
$X$ and~$Y$ as paths on a Euclidean space: $X\dvtx I\rightarrow{\mathbb
R}^n$ and $Y\dvtx I\rightarrow{\mathbb R}^m$ for $I:=[0,T]$, so $X_t \in
{\mathbb R}^n$ and $Y_t \in{\mathbb R}^m$ for each $t\in I$. Also,
$f\dvtx{\mathbb R}^m\rightarrow{\mathrm{L}}({\mathbb R}^n,{\mathbb R}^m)$, where
${\mathrm L}({\mathbb R}^n,{\mathbb R}^m)$ is the space of linear functions
from ${\mathbb R}^n$ to ${\mathbb R}^m$ which is isomorphic to the
space of $m\times n$ matrices. For the sake of simplicity, we will
assume that~$f(y)$ is a polynomial in $y$---however, the theory holds
for more general~$f$. The path $X$ is any path of finite $p$-variation,
meaning that
\[
\sup_{{\mathcal D}\subset[0,T]}\biggl( \sum_{\ell}\|X_{t_\ell
}-X_{t_{\ell-1}}\|^p \biggr)^{1/p} < \infty,
\]
where ${\mathcal D} = \{t_\ell\}_\ell$ goes through all possible
partitions of $[0,T]$ and \mbox{$\|\cdot\|$} is the Euclidean norm. Note that
we will later define finite $p$-variation for multiplicative
functionals, also to be defined later.

The fact the $X$ is allowed to have any finite $p$-variation is exactly
what makes this theory so general: Brownian motion is an example of a
path that has finite $p$-variation for any $p>2$ while fractional
Brownian motion with Hurst index $h$ has finite $p$ variation for $p>
\frac{1}{h}$. We will define fractional Brownian motion formally in the
corresponding example---for now, let us just say that it is Gaussian,
self-similar but not Markovian except for $h=1/2$ when it coincides
with Brownian motion.

When $p\in[1,2)$, we say that $Y$ is a solution of (\ref{main}) if
\[
Y_t = Y_s + \int_s^t f(Y_u)\cdot dX_u\qquad \forall(s,t)\in\Delta_T,
\]
where $\Delta_T:= \{ (s,t); 0\leq s \leq t \leq T\}$. In this case,
the integral is defined as the Young integral (see \cite{Terrybook}).
What does it mean for $Y$ to be a solution of~(\ref{main}) when $p\geq
2$? In order to answer this question, we first need to define the
integral. To make this task possible, we rewrite the integral so that
the integrand is a function of the integrator: set $f_{y_0}(\cdot) :=
f(\cdot+ y_0)$. Define $h\dvtx\R^n\oplus\R^m \rightarrow{\mathrm{End}}
(\R^n\oplus\R^m)$ by
%
\begin{equation}
\label{h}
h(x,y) := \pmatrix{I_{n\times n} & \mathbf{0}_{n\times m} \cr
f_{y_0}(y) & {\mathbf0}_{m\times m}}.
\end{equation}
Instead of defining $ \int_s^t f(Y_u)\cdot dX_u$, we will define the integral
%
\begin{equation}
\label{inth}
\int_s^t h(Z_u)\cdot dZ_u\qquad\forall(s,t)\in\Delta_T,
\end{equation}
where $Z = (X,Y)$. Note that if $f$ is a polynomial in $y$, then $h$
will also be a~polynomial in $z$. More generally, we will define this
integral\vspace*{1pt} for any path~$Z$ in $\R^{\ell_1}$ of finite $p$-variation and\vadjust{\goodbreak}
any polynomial $h\dvtx\R^{\ell_1}\rightarrow{\mathrm L}(\R^{\ell_1},\R^{\ell
_2})$~of~degree~$q$. Since $h$ is a polynomial, its Taylor expansion
will be a finite sum:
\[
h(z_2) = \sum_{k=0}^q h_k(z_1)\frac{(z_2 - z_1)^{\otimes k}}{k!}\qquad
\forall z_1,z_2\in\R^{\ell_1},
\]
where $h_0 = h$ and $h_k\dvtx\R^{\ell_1}\rightarrow{\mathrm L}({\R^{\ell
_1}}^{\otimes k},{\mathrm L}(\R^{\ell_1},\R^{\ell_2}))$ and for all $z\in\R
^{\ell_1}$, $h_k(z)$ is a symmetric $k$-linear mapping from $\R^{\ell
_1}$ to ${\mathrm L}(\R^{\ell_1},\R^{\ell_2})$, for $k\geq1$.

Suppose that $Z$ is a path of bounded variation (i.e., $p=1$). Then,
using the symmetry of $h_k(z)$ and the ``shuffle product property,'' we
can write
\[
h(Z_u) = \sum_{k=0}^q h_k(Z_s) \mathbf{Z}^k_{s,u}\qquad
\forall(s,u)\in\Delta_T,
\]
where for every $(s,t)\in\Delta_T$,
\[
\mathbf{Z}^0\equiv1\in\R
\]
and
\[
\mathbf{Z}^k_{s,t} = \biggl\{{{\int\cdots
\int}}_{s<u_1<\cdots<u_k<t}dZ^{(i_1)}_{u_1}\cdots dZ^{(i_k)}_{u_k}\biggr\}
_{(i_1,\ldots,i_k)\in\{1,\ldots,n\}^k} \in{\R^{\ell_1}}^{\otimes k}.
\]
More specifically, we use the notation
\[
Z_{s,t}^{(i_1,\ldots,i_k)} := {{\int\cdots\int}}_{s<u_1<\cdots
<u_k<t}dZ^{(i_1)}_{u_1}\cdots dZ^{(i_k)}_{u_k}.
\]
The ``shuffle product property'' says\vspace*{1pt} that for any $(s,u)\in\Delta_T$
and any ``words'' $\sigma_1,\sigma_2\in\bigcup_{k\geq0}\{1,\ldots,\ell
_1\}^k$, we can write
%
\begin{equation}
\label{shuffleproduct}
\mathbf{Z}^{\sigma_1}_{s,u} \mathbf{Z}^{{\sigma_2}}_{s,u} = \sum_{\sigma\in
\sigma_1\sqcup\sigma_2}\mathbf{Z}^{\sigma}_{s,u},
\end{equation}
where $\sigma_1\sqcup\sigma_2$ is the \textit{shuffle product} between the
words $\sigma_1$ and $\sigma_2$, that is, it is the set of all words
(with repetition) that we can create by mixing up the letters of $\sigma
_1$ and $\sigma_2$ without changing the order of letters within each
word. For example, $(1,2)\sqcup(2) = \{ (1,2,2),(1,2,2), (2,1,2) \}$
(see \cite{Lyons2007}). This generalizes the ``integration by parts''
formula. Then, for all $(s,t)\in\Delta_T$,
\[
\int_s^t h(Z_u) \,dZ_u = \sum_{k=0}^q h_k(Z_s) \mathbf{Z}^{k+1}_{s,t}.
\]

\begin{example}
Let us demonstrate what we have said so far with an example. Consider
the ordinary differential equation
\[
dY_t = Y_t \,dt + (Y_t^2 + 1) \,d e^t,\qquad Y_0 = 0.
\]
Then, $X_t = (t,e^t)$ is a path in ${\mathbb R}^2$, $Y_t \in{\mathbb
R}$ and $f(y) = (y,y^2+1)\in{\mathrm L}({\mathbb R}^2,{\mathbb R})$, which
is polynomial of degree $2$. In this case, $X$ is of bounded
variation\vadjust{\goodbreak}
and $p=1$. Following what we just mentioned, instead of defining the integral
\[
\int_s^t f(Y_u) \,dX_u = \int_s^t \bigl(Y_u \,du + (Y_u^2+1)\,de^u\bigr)
\]
directly, we set $Z_t = (X_t ,Y_t )' = (t, e^t, Y_t)' \in{\mathbb
R}^3$ and
\[
h(Z_t) = \pmatrix{0 & 0 & 0 \cr 0 & 0 & 0 \vspace*{2pt}\cr Z^{(3)}_t & \bigl(Z^{(3)}_t\bigr)^2
+ 1 & 0},
\]
where $Z^{(3)}_t = Y_t$ is the projection of $Z_t$ to the third
dimension. Then, the integral $\int_s^t h(Z_u) \,dZ_u$ becomes
\[
\int_s^t h(Z_u) \,dZ_u = \biggl( 0, 0, \int_s^t f(Y_u) \,dX_u \biggr),
\]
so, defining $\int_s^t h(Z_u) \,dZ_u$ is equivalent to defining $\int_s^t
f(Y_u) \,dX_u$. We now proceed to writing the integral as a linear
combination of iterated integrals of $Z$, using the fact that $h$ is a
quadratic polynomial. We define $h_k$ as
\[
h_0(z) = h(z),\qquad h_1(z) = \{\partial_i h(z) \}_{i=1}^3,\qquad h_2(z) = \{
\partial_{i_1,i_2} h(z) \}_{i_1,i_2=1}^3.
\]
Also, we note that
\[
\bigl((z_2-z_1)^{\otimes1}\bigr)_{i} = z_2^{(i)}- z_1^{(i)}
\quad\mbox{and}\quad \bigl((z_2-z_1)^{\otimes2}\bigr)_{i_1,i_2} = \bigl(z_2^{(i_1)}-
z_1^{(i_1)}\bigr)\bigl(z_2^{(i_2)}- z_1^{(i_2)}\bigr)
\]
and thus the sum $\sum_{k=0}^2 h_k(z_1)\frac{(z_2-z_1)^{\otimes
k}}{k!}$ becomes
\[
\pmatrix{0 \cr 0 \vspace*{2pt}\cr z^{(3)}_1 + \bigl(z^{(3)}_1\bigr)^2 + 1}
+ \pmatrix{0 \cr 0 \vspace*{2pt}\cr \bigl(1+2 z^{(3)}_1 \bigr)\bigl(z^{(3)}_2-z^{(3)}_1\bigr)}
+ \pmatrix{0 \cr 0 \vspace*{2pt}\cr
2\dfrac{(z^{(3)}_2-z^{(3)}_1)^2}{2}},
\]
which is equal to $h(z_2)$. It is easy to see that for all $0<s<t<T$,
\[
\bigl(z^{(3)}_t-z^{(3)}_s\bigr) = \int_s^t dz^{(3)}_u \quad\mbox{and}\quad \frac
{(z^{(3)}_t-z^{(3)}_s)^2}{2} = \int_s^t \int_s^{u_1} \,dz^{(3)}_{u_1}
\,dz^{(3)}_{u_2}.
\]
Thus, using the notation of the iterated integral, we write
\[
h(z_u) = h(z_s) + \partial_3 h(z_s) Z_{s,u}^{(3)} + \partial_{3,3}^2
h(z_s)Z_{s,u}^{(3,3)}
\]
and if we integrate once more we get
\[
\int_s^t h(z_u) \,du = h(z_s)Z_{s,t}^{(3)} + \partial_3 h(z_s)
Z_{s,t}^{(3,3)} + \partial_{3,3}^2 h(z_s)Z_{s,t}^{(3,3,3)}.
\]
\end{example}

Note that in the above example, we did not use the shuffle product
formula because $m=1$ ($Y_t \in{\mathbb R}$). If the response $Y$\vadjust{\goodbreak}
lives in more that one dimensions, then the shuffle product formula is
used, for example, to say that
\[
\tfrac{1}{2}(z_t-z_s)^{(i_2)} (z_t-z_s)^{(i_1)} = \tfrac
{1}{2}Z_{s,t}^{(i_2)}Z_{s,t}^{(i_1)} = Z_{s,t}^{(i_1,i_2)}+Z_{s,t}^{(i_2,i_1)}.
\]
Below we give a concrete example to show how the shuffle product
formula extends integration by parts.
\begin{example}
Let us give here an example of the shuffle product. Let $z_t$ be a
smooth path in ${\mathbb R}^m$ for some $m\geq1$. Then, for any pair
$i_1,i_2 \in\{1,\ldots,m\}$ using the integration by parts formula, we get
\begin{eqnarray*}
Z^{(i_1,i_2)}_{s,t} &=& \int_s^t \int_s^u
dz^{(i_1)}_{u_2}\,dz^{(i_2)}_{u} = \int_s^t \bigl(z^{(i_1)}_u -
z^{(i_1)}_s\bigr)\,dz^{(i_2)}_{u} \\
&=& \int_s^t z^{(i_1)}_u \,dz^{(i_2)}_{u} - z^{(i_1)}_s
\bigl(z^{(i_2)}_t-z^{(i_2)}_s\bigr)  \\
&=& \bigl[z^{(i_1)}_u z^{(i_2)}_u\bigr]_s^t - \int_s^t z^{(i_2)}_u \,dz^{(i_1)}_{u}
- z^{(i_1)}_s\bigl(z^{(i_2)}_t-z^{(i_2)}_s\bigr) \\
&=& z^{(i_2)}_t\bigl(z^{(i_1)}_t-z^{(i_1)}_s\bigr) - \int_s^t
z^{(i_2)}_u \,dz^{(i_1)}_{u} \\
&=& \bigl(z^{(i_2)}_t-z^{(i_2)}_s\bigr)
\bigl(z^{(i_1)}_t-z^{(i_1)}_s\bigr)- \int_s^t \bigl(z^{(i_2)}_u-z^{(i_2)}_s\bigr)
\,dz^{(i_1)}_{u} \\
&=& Z^{(i_1)}_{s,t}Z^{(i_2)}_{s,t} - Z^{(i_2,i_1)}_{s,t},
\end{eqnarray*}
which is in agreement with the shuffle product formula, since the
shuffle product of two letters is $(i_1)\sqcup(i_2) = \{ (i_1,i_2),
(i_2,i_1) \}$.
\end{example}

It is now clear that in order to extend this construction to any path
$Z$ of finite $p$-variation, where $p\geq2$, we will first need to
define their iterated integrals~$\mathbf{Z}^k_{s,t}$. These are not
necessarily unique (e.g., if $Z$ is Brownian motion, then It\^
{o} and Stratonovich gave two different definitions for the integral).
Then, we will need to find those integrals that respect the ``shuffle
product property.'' Before going any further, we need to give some definitions.
\begin{definition} Let $\Delta_T:= \{ (s,t); 0\leq s \leq t \leq T\}$.
Let $p\geq1$ be a real number. We denote by $T^{(k)}({\mathbb R}^{\ell
_1})$ the $k$th truncated tensor algebra
\[
T^{(k)}({\R}^{\ell_1}):= \R\oplus\R^{\ell_1}\oplus{\R^{\ell
_1}}^{\otimes2}\oplus\cdots\oplus{\R^{\ell_1}}^{\otimes k} .
\]

\begin{longlist}[(1)]
\item[(1)] Let $\mathbf{Z}\dvtx\Delta_T \rightarrow T^{(k)}(\R^{\ell_1})$ be a
continuous map. For each $(s,t)\in\Delta_T$, denote by $\mathbf{Z}_{s,t}$
the image of $(s,t)$ through $\mathbf{Z}$ and write
\[
\mathbf{Z}_{s,t} = ( \mathbf{Z}_{s,t}^0,\mathbf{Z}_{s,t}^1,\ldots,\mathbf{Z}_{s,t}^{k} ) \in T^{(k)}({\R}^{\ell_1})
\qquad\mbox{where } \mathbf{Z}^j_{s,t} = \bigl\{\mathbf{Z}^{(i_1,\ldots,i_j)}_{s,t}\bigr\}_{i_1,\ldots
,i_j=1}^{\ell_1}.
\]
The function $\mathbf{Z}$ is called a \textit{multiplicative functional} of
degree $k$ in $\R^{\ell_1}$ if $\mathbf{Z}^0_{s,t} = 1$ for all $(s,t)\in
\Delta_T$ and
\[
\mathbf{Z}_{s,u}\otimes\mathbf{Z}_{u,t} = \mathbf{Z}_{s,t}\qquad \forall s,u,t
\mbox{ satisfying } 0\leq s \leq u \leq t \leq T,
\]
that is, for every $(i_1,\ldots,i_l)\in{\{1,\ldots,{\ell_1}\}}^{l}$ and
$l=1,\ldots,k$:
\[
(\mathbf{Z}_{s,u}\otimes\mathbf{Z}_{u,t})^{(i_1,\ldots,i_l)} = \sum
_{j=0}^l\mathbf{Z}_{s,u}^{(i_1,\ldots,i_j)}\mathbf{Z}_{u,t}^{(i_{j+1},\ldots,i_l)}.
\]
This is called \textit{Chen's identity}.
\item[(2)] A \textit{$p$-rough path} $\mathbf{Z}$ in $\R^{\ell_1}$ is a
multiplicative functional of degree $\lfloor p \rfloor$ in $\R^{\ell
_1}$ that has finite $p$-variation, that is, $\forall i = 1,\ldots,
\lfloor p \rfloor$ and $(s,t)\in\Delta_T$, it satisfies
\[
\| \mathbf{X}^i_{s,t} \| \leq\frac{(M(t-s))^{{i/p}}}{\beta( {i/p})!},
\]
where \mbox{$\|\cdot\|$} is the Euclidean norm in the appropriate dimension
and $\beta$ a real number depending only on $p$ and $M$ is a fixed
constant. The space of $p$-rough paths in $\R^{\ell_1}$ is denoted by
$\Omega_p(\R^{\ell_1})$.
\item[(3)] A \textit{geometric $p$-rough path} is a $p$-rough path that
can be expressed as a limit of 1-rough paths in the $p$-variation
distance $d_p$, defined as follows: for any $\mathbf{X},\mathbf{Y}$
continuous functions from $\Delta_T$ to $T^{(\lfloor p \rfloor)}({\R
}^{\ell_1})$,
\[
d_p(\mathbf{X},\mathbf{Y}) = \max_{1\leq i\leq\lfloor p \rfloor} \sup
_{{\mathcal D}\subset[0,T]}\biggl( \sum_{\ell}\|\mathbf{X}^i_{t_{\ell
-1},t_\ell}-\mathbf{Y}^i_{t_{\ell-1},t_\ell}\|^{p/i} \biggr)^{i/p},
\]
where ${\mathcal D} = \{t_\ell\}_\ell$ goes through all possible
partitions of $[0,T]$. The space of geometric $p$-rough paths in $\R^n$
is denoted by $G\Omega_p(\R^{\ell_1})$.
\end{longlist}
\end{definition}

One of the main results of the theory of rough paths is the following,
called the ``extension theorem.''
\begin{theorem}[(Theorem 3.7, \cite{Lyons2007})]
Let $p\geq1$ be a real number and $k\geq1$ be an integer. Let $\mathbf{X}\dvtx\Delta_T\rightarrow T^{(k)}(\R^n)$ be a multiplicative functional
with finite $p$-variation. Assume that $k\geq\lfloor p \rfloor$. Then
there exists a unique extension of~$\mathbf{X}$ to a multiplicative
functional $\hat\mathbf{X}\dvtx\Delta_T\rightarrow T^{(k+1)}(\R^n)$.
\end{theorem}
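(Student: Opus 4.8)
The plan is to construct the missing top-degree component ${\bf X}^{k+1}$ as a limit of tensor products over partitions, establish convergence via the finite $p$-variation bounds together with a neo-classical (factorial) inequality, and then verify multiplicativity, the $p$-variation estimate, and uniqueness in turn. For a finite partition $D=\{s=u_0<u_1<\cdots<u_r=t\}$ of $[s,t]$ I would embed each increment ${\bf X}_{u_{\ell-1},u_\ell}\in T^{(k)}(\R^n)$ into $T^{(k+1)}(\R^n)$ by declaring its degree-$(k+1)$ entry to be zero, form the product ${\bf X}_{u_0,u_1}\otimes\cdots\otimes{\bf X}_{u_{r-1},u_r}$ in $T^{(k+1)}(\R^n)$, and define ${\bf X}^{k+1,D}_{s,t}$ to be its degree-$(k+1)$ component. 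Because ${\bf X}$ is multiplicative up to degree $k$, the components of this product in degrees $1,\dots,k$ equal ${\bf X}^1_{s,t},\dots,{\bf X}^k_{s,t}$ for every $D$; only the top degree depends on the choice of partition, and it is this object whose limit I wish to control.

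The central computation controls the effect of refining $D$ by inserting one point $u\in(u_{\ell-1},u_\ell)$. Writing the product as $L\otimes{\bf X}_{u_{\ell-1},u_\ell}\otimes R$ and using multiplicativity in degrees $\le k$, the refined and unrefined products agree in all degrees $\le k$ and differ only by a homogeneous degree-$(k+1)$ element inserted between $L$ and $R$; since that element can only pair with the degree-$0$ parts of $L$ and $R$ when projected to degree $k+1$, the prefix and suffix contribute nothing and
\[ {\bf X}^{k+1,D\cup\{u\}}_{s,t} - {\bf X}^{k+1,D}_{s,t} = \sum_{i=1}^{k}{\bf X}^i_{u_{\ell-1},u}\otimes{\bf X}^{k+1-i}_{u,u_\ell}. \]
Bounding each factor by the $p$-variation estimate and applying the neo-classical inequality to the resulting sum of reciprocal factorials controls this difference by a constant times $(M(u_\ell-u_{\ell-1}))^{(k+1)/p}/((k+1)/p)!$. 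To remove points efficiently I would invoke the pigeonhole principle: in any partition into $r$ subintervals there is an interior point with $u_{\ell+1}-u_{\ell-1}\le 2(t-s)/(r-1)$, so removing it changes the sum by at most a constant multiple of $(2M(t-s)/(r-1))^{(k+1)/p}$. Telescoping as points are deleted one at a time down to the trivial partition $\{s,t\}$ then yields a bound governed by $\sum_r (1/r)^{(k+1)/p}$.

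This estimate is the crux of the argument, and it is precisely where the hypothesis $k\ge\lfloor p\rfloor$ is used: it forces $(k+1)/p>1$, so the series converges, the family $\{{\bf X}^{k+1,D}_{s,t}\}$ is Cauchy under refinement, and I may define ${\bf X}^{k+1}_{s,t}$ as its limit. The same summation gives a quantitative bound $\|{\bf X}^{k+1}_{s,t}\|\le (M(t-s))^{(k+1)/p}/(\beta'((k+1)/p)!)$ for a suitable constant $\beta'$, so $\hat{\bf X}=({\bf X},{\bf X}^{k+1})$ has finite $p$-variation. Multiplicativity in the new degree, namely ${\bf X}^{k+1}_{s,t}=\sum_{j=0}^{k+1}{\bf X}^j_{s,u}\otimes{\bf X}^{k+1-j}_{u,t}$, would follow by splitting a partition of $[s,t]$ at an intermediate point $u$ and passing to the limit in the approximations.

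For uniqueness I would take two multiplicative extensions $\hat{\bf X}$ and $\hat{\bf Y}$ of finite $p$-variation and set $\phi_{s,t}={\bf X}^{k+1}_{s,t}-{\bf Y}^{k+1}_{s,t}$. Subtracting the two degree-$(k+1)$ multiplicativity relations, every term of degree $\le k$ is common to both and cancels, leaving $\phi_{s,t}=\phi_{s,u}+\phi_{u,t}$; that is, $\phi$ is additive. Since $\phi$ inherits finite $(p/(k+1))$-variation with exponent $p/(k+1)<1$, summing over a partition and letting the mesh tend to zero forces $\phi\equiv 0$, which establishes uniqueness and completes the plan.
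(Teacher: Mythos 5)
The paper itself offers no proof of this statement: it is quoted directly as Theorem 3.7 of the cited reference \cite{Lyons 2007}, so there is no in-paper argument to compare against. Your proposal correctly reconstructs the standard proof of Lyons' extension theorem as given in that reference, and in the right order: partition products with the top degree set to zero, the one-point insertion identity whose cross terms $\sum_{i=1}^{k}{\bf X}^i\otimes{\bf X}^{k+1-i}$ are controlled by the neoclassical (factorial) inequality, the pigeonhole choice of which point to remove, convergence of $\sum_r r^{-(k+1)/p}$ exactly because $k\geq\lfloor p\rfloor$ forces $(k+1)/p>1$, and uniqueness via additivity of the difference of two extensions combined with a variation exponent below one. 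One step deserves tightening: telescoping down to the trivial partition gives only a \emph{uniform bound} on $\|{\bf X}^{k+1,D}_{s,t}\|$ over all partitions $D$, which by itself is not the Cauchy property as the mesh tends to zero. The standard completion is to apply that uniform bound on each subinterval of the coarser partition and use superadditivity of the control $\omega(s,t)=M(t-s)$: for $D\subset D'$ with $D=\{v_i\}$, the factors of the two products agree in degrees $\leq k$, so their top-degree discrepancies pair only with degree-zero terms and
\[ \left\| \hat{\bf X}^{D'}_{s,t}-\hat{\bf X}^{D}_{s,t} \right\| \leq C\sum_i \omega(v_{i-1},v_i)^{\frac{k+1}{p}} \leq C\,\max_i\omega(v_{i-1},v_i)^{\frac{k+1}{p}-1}\,\omega(s,t), \]
which vanishes with the mesh since $(k+1)/p-1>0$. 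With that routine repair, your outline is precisely the proof in the cited source.
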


Let $X\dvtx[0,T]\rightarrow\R^n$ be an $n$-dimensional path of finite
$p$-variation for $n>1$. One way of constructing a $p$-rough path is by
considering the set of all iterated integrals of degree up to $\lfloor
p \rfloor$. If $X_t = (X_t^{(1)},\ldots,X_t^{(n)})$, we
define $\mathbf{X}\dvtx\Delta_T\rightarrow T^{(\lfloor p \rfloor)}$ as follows:
\[
\mathbf{X}^0\equiv1\in\R
\]
and
\[
\mathbf{X}^k_{s,t} = \biggl\{{{\int\cdots
\int}}_{s<u_1<\cdots<u_k<t}dX^{(i_1)}_{u_1}\cdots dX^{(i_k)}_{u_k}\biggr\}
_{(i_1,\ldots,i_k)\in\{1,\ldots,n\}^k} \in{\R^n}^{\otimes k}
\]
for $k = 1,\ldots, \lfloor p \rfloor$. Note that Chen's identity is an
identity all iterated integrals satisfy. For example, for word
$(i_1,i_2)$ Chen's identity says that
\[
(\mathbf{Z}_{s,t})^{(i_1,i_2)} = (\mathbf{Z}_{s,u}
)^{(i_1,i_2)} + (\mathbf{Z}_{s,u})^{(i_1)}(\mathbf{Z}_{u,t})^{(i_2)} + (\mathbf{Z}_{u,t})^{(i_1,i_2)}.
\]
This follows by breaking the domain of integration $\{ u_1,u_2 \dvtx
s<u_1<u_2<t \}$ into three domains $\{ u_1,u_2 \dvtx s<u_1<u_2<u \}$, $\{
u_1,u_2 \dvtx u<u_1<u_2<t \}$ and $\{ u_1,u_2 \dvtx s<u_1<u \mbox{ and } u<u_2<t
\}$.

When $p\in[1,2)$, the iterated integrals are uniquely defined as Young
integrals. However, as we already mentioned, when $p\geq2$ there will
be more than one way of defining them. What the extension theorem says
is that if the path has finite $p$-variation and we define the first
$\lfloor p \rfloor$ iterated integrals, the rest will be uniquely
defined. So, if the path is of bounded variation ($p = 1$) we only need
to know its increments, while for an $n$-dimensional Brownian path, we
need to define the second iterated integrals by specifying the rules on
how to construct them. In general, we can think of a $p$-rough path as
a~path $X\dvtx[0,T]\rightarrow\R^n$ of finite $p$-variation, together with
a set of rules on how to define the first $\lfloor p \rfloor$ iterated
integrals. Once we know how to construct the first $\lfloor p \rfloor$,
we know how to construct all of them.
\begin{definition}
\label{signature}
Let $X\dvtx[0,T]\rightarrow\R^n$ be a path. The set of all iterated
integrals is called the \textit{signature of the path} and is denoted by $S(X)$.
\end{definition}


We can now proceed to define the integral (\ref{inth}) when $Z$ is a
path of finite $p$-variation with $p\geq2$. First, it is clear that in
order for the integral to be uniquely defined, we should define the
first $\lfloor{p}\rfloor$ iterated integrals, so we define the integral
not with respect to $Z$ but a corresponding $p$-rough path $\mathbf{Z}$.
To extend the previous construction, we also need that $\mathbf{Z}$
satisfies the ``shuffle product property.'' It is not hard to see that
geometric $p$-rough paths do satisfy this property since they are
limits of paths of bounded variation and for paths of bounded variation
the property follows from the usual integration by parts formula (see
also \cite{Terrybook}). So, we will define $\int h(\mathbf{Z}) \,d\mathbf{Z}$,
where $\mathbf{Z}$ is a geometric $p$-rough path in $\R^{\ell_1}$, that
is, $\mathbf{Z}\in G\Omega_p(\R^{\ell_1})$.

By definition, there exists a sequence $\mathbf{Z}(\bfr)\in\Omega_1(\R^{\ell
_1})$ such that $d_p(\mathbf{Z}(\bfr),\allowbreak\mathbf{Z})\rightarrow0$ as $r\rightarrow
\infty$. Then, for each $r>0$, we define $\tilde{\mathbf{Z}}(\bfr) := \int
h(\mathbf{Z}(\bfr))\,d\mathbf{Z}(\bfr)$. These are also a 1-rough paths in $\R^{\ell
_2}$ and thus, their higher iterated integrals are uniquely defined. In
addition, it is possible to show that the map $\int h \dvtx\Omega_1(\R^{\ell
_1})\rightarrow\Omega_1(\R^{\ell_2})$ sending $\mathbf{Z}(\bfr)$ to
$\tilde{\mathbf{Z}}(\bfr)$ is continuous in the $p$-variation topology.

We define $\tilde{\mathbf{Z}} := \int h(\mathbf{Z})\,d\mathbf{Z}$ as the limit of
the $\tilde{\mathbf{Z}}(\bfr)$ with respect to $d_p$---this is will also be
a geometric $p$-rough path. In other words, the continuous map $\int h$
can be extended to a continuous map from $G\Omega_p(\R^{\ell_1})$ to
$G\Omega_p(\R^{\ell_2})$, which are the closures of $\Omega_1(\R^{\ell
_1})$ and $\Omega_1(\R^{\ell_2})$, respectively (see Theorem~4.12,~\cite
{Lyons2007}).

Note that this construction of the integral can be extended for any
$h\in{\mathrm{Lip}}(\gamma-1)$ for $\gamma>p$ (see \cite{Lyons2007}).

%
%
\begin{remark} We say that a sequence $\mathbf{Z}(\bfr)$ of $p$-rough paths
converges to a $p$-rough path $\mathbf{Z}$ in $p$-variation topology if
there exists an $M\in\R$ and a~sequence $a(r)$ converging to zero when
$r\rightarrow\infty$, such that
\[
\|\mathbf{Z}(\bfr)^i_{s,t}\|\mbox{, }\|\mathbf{Z}^i_{s,t}\|\leq\bigl(M(t-s)\bigr)^{i/p}
\]
and
\[
\|\mathbf{Z}(\bfr)^i_{s,t} - \mathbf{Z}^i_{s,t}\|\leq a(r)\bigl(M(t-s)\bigr)^{i/p}
\]
for $i=1,\ldots,\lfloor p \rfloor$ and $(s,t)\in\Delta_T$. Note that
this is not exactly equivalent to convergence in $d_p$: while
convergence in $d_p$ implies convergence in the $p$-variation topology,
the opposite is not true. Convergence in the $p$-variation topology
implies that there is a \textit{subsequence} that converges in $d_p$.
\end{remark}

We can now give the precise meaning of the solution of (\ref{main}),
when driven not by a path $X$ but a geometric $p$-rough path $\mathbf{X}$:
\begin{definition}
\label{solution}
Consider $\mathbf{X}\in G\Omega_p(\R^n)$ and $y_0\in\R^m$. Set
$f_{y_0}(\cdot) := f(\cdot+ y_0)$ and define $h\dvtx\R^n\oplus\R^m
\rightarrow{\mathrm{End}}(\R^n\oplus\R^m)$ as in (\ref{h}). We call $\mathbf{Z}\in G\Omega_p(\R^n\oplus\R^m)$ a \textit{solution} of (\ref{main}) if
the following two conditions hold:
\begin{longlist}[(ii)]
\item[(i)] $\mathbf{Z} = \int h(\mathbf{Z}) \,d\mathbf{Z}$.
\item[(ii)] $\pi_{\R^n}(\mathbf{Z}) = \mathbf{X}$, where by $\pi_{\R^n}$ we
denote the projection of $\mathbf{Z}$ to $\R^n$.
\end{longlist}
\end{definition}

As in the case of ordinary differential equations ($p=1$), it is
possible to construct the solution using Picard iterations: we define
$\mathbf{Z(0)} := (\mathbf{X},\mathbf{e})$, where by $\mathbf{e}$ we denote the
trivial rough path $\mathbf{e} = (1,{\mathbf0}_{\R^n},{\mathbf0}_{{\R
^n}^{\otimes2}},\ldots)$. Then, for every $r\geq1$, we define
$\mathbf{Z}(\bfr) = \int h(\mathbf{Z}(\bfr-1)) \,d\mathbf{Z}(\bfr-1)$. The following
theorem, called the ``Universal Limit theorem,'' gives the conditions
for the existence and uniqueness of the solution to (\ref{main}). The
theorem holds for any $f\in{\mathrm{Lip}}(\gamma)$ for $\gamma>p$ but we
will assume that $f$ is a polynomial. The proof is based on the
convergence of the Picard iterations.
\begin{theorem}[(Theorem 5.3, \cite{Lyons2007})]
\label{universallimittheorem} Let $p\geq1$. For all $\mathbf{X}\in
G\Omega_p(\R^n)$ and all $y_0\in\R ^m$, equation (\ref{main}) admits a
unique solution $\mathbf{Z} = (\mathbf{X},\mathbf{Y})\in
G\Omega_p(\R^n\oplus\R^m)$, in the sense of Definition \ref{solution}.
This solution depends continuously on~$\mathbf{X}$ and $y_0$ and the
mapping $I_f\dvtx G\Omega_p(\R^n)\rightarrow G\Omega_p(\R^m)$ which
sends $(\mathbf{X},y_0)$ to $\mathbf{Y}$ is continuous in the
$p$-variation topology.

The rough path $\mathbf{Y}$ is the limit of the sequence $\mathbf{Y}(r)$,
where $\mathbf{Y}(r)$ is the projection of the $r$th Picard
iteration $\mathbf{Z}(r)$ to $\R^m$. For all $\rho>1$, there exists $T_\rho
\in(0,T]$ such that
\begin{eqnarray}
\| \mathbf{Y}(r)^i_{s,t} - \mathbf{Y}(r+1)^i_{s,t}\| \leq2^i \rho^{-r} \frac
{(M(t-s))^{i/p}}{\beta({i/p})!}\nonumber\\
&&\eqntext{\forall(s,t)\in\Delta_{T_\rho}, \forall i=0,\ldots,\lfloor p
\rfloor.}
\end{eqnarray}
The constant $T_\rho$ depends only on $f$ and $p$.
\end{theorem}

\subsection{The problem}
\label{problem}

We now describe the problem that we are going to study in the rest of
the paper. Let $(\Omega,{\mathcal F},{\mathbb P})$ be a
probability space and $\mathbf{X}\dvtx\Omega\rightarrow G\Omega_p(\R^n)$ a
random variable, taking values in the space of geometric $p$-rough
paths endowed with the $p$-variation topology. For each $\omega\in\Omega
$, the rough path $\mathbf{X}(\omega)$ drives the following differential equation
%
\begin{equation}
\label{mymain}
dY_t(\omega) = f(Y_t(\omega);\theta)\cdot dX_t(\omega), \qquad Y_0 =
y_0,
\end{equation}
where $\theta\in\Theta\subseteq\R^d$, $\Theta$ being the parameter
space and for each $\theta\in\Theta$. As before, $f\dvtx\R^m\times\Theta
\rightarrow{\mathrm L}(\R^n,\R^m)$ and $f_\theta(y) := f(y;\theta)$ is a
polynomial in $y$ for each $\theta\in\Theta$. According to Theorem \ref
{universallimittheorem}, we can think of equation (\ref{mymain}) as a~map
%
\begin{equation}
\label{Itomap}
I_{f_\theta,y_0}\dvtx G\Omega_p(\R^n)\rightarrow G\Omega_p(\R^m),
\end{equation}
sending a geometric $p$-rough path $\mathbf{X}$ to a geometric $p$-rough
path $\mathbf{Y}$ and is continuous with respect to the $p$-variation
topology. Consequently,
\[
\mathbf{Y} := I_{f_\theta,y_0}\circ\mathbf{X} \dvtx \Omega\rightarrow G\Omega
_p(\R^m)
\]
is also a random variable, taking values in $G\Omega_p(\R^m)$ and if
${\mathbb P}^T$ is the distribution of $\mathbf{X}_{0,T}$, the
distribution of $\mathbf{Y}_{0,T}$ will be
%
\begin{equation}
\label{Q}
{\mathbb Q}_{\theta}^T = {\mathbb P}^T\circ I_{f_{\theta},y_0}^{-1}.
\end{equation}
Suppose that we know the \textit{expected signature} of $\mathbf{X}$ at
$[0,T]$, that is, we know
\[
{\mathbb E}\bigl( \mathbf{X}^{(i_1,\ldots,i_k)}_{0,T} \bigr) := {\mathbb
E}\biggl( \int\cdots\int_{0<u_1<\cdots<u_k<T}dX^{(i_1)}_{u_1}\cdots
dX^{(i_k)}_{u_k} \biggr)
\]
for all $i_j\in\{1,\ldots,n\}$ where $j=1,\ldots,k$ and $k\geq1$. Our
goal will be to estimate~$\theta$, given several realizations of $\mathbf{Y}_{0,T}$, that is, $\{\mathbf{Y}_{0,T}(\omega_i)\}_{i=1}^N$.
\begin{remark}
We are assuming that we are observing many independent copies of the
signature at just one point $T$. In the case of scalar response, this
is equivalent to observing many independent realizations of the
response at just one point in time. In the case the response lives in
more than one dimensions, the elements of the signature might depend on
either the whole path up to time $T$ or just on $T$, making the method
appropriate for both cases of discrete or continuous observations.
\end{remark}

\section{Method}
\label{method}

In order to estimate $\theta$, we are going to use a method that is
similar to the ``Method of Moments.'' The idea is simple: we will try
to (partially) match the empirical expected signature of the observed
$p$-rough path with the theoretical one, which is a function of the
unknown parameters. Remember that the data we have available is several
realizations of the $p$-rough path $\mathbf{Y}_{0,T}$ described in
Section~\ref{problem}. To make this more precise, let us introduce some
notation: let
%
\begin{equation}
\label{theoreticalE}
E^\tau(\theta) := {\mathbb E}_\theta(\mathbf{Y}^\tau_{0,T})
\end{equation}
be the \textit{theoretical expected signature} corresponding to parameter
value $\theta$ and word $\tau$ and
%
\begin{equation}
\label{empiricalaverage}
M^\tau_N := \frac{1}{N}\sum_{i=1}^N \mathbf{Y}^\tau_{0,T}(\omega_i)
\end{equation}
be the \textit{empirical expected signature}, which is a Monte Carlo
approximation of the actual one. The word $\tau$ is constructed from
the alphabet $\{1,\ldots,m\}$, that is, $\tau\in W_m$ where $W_m :=
\bigcup_{k\geq0} \{1,\ldots,m\}^k$. The idea is to find $\hat{\theta}$
such that
\[
E^\tau(\hat{\theta}) = M^\tau_N\qquad \forall\tau\in V \subset W_m
\]
for some choice of a set of words $V$. Then $\hat{\theta}$ will be our estimate.
\begin{remark}
\label{signatureandmoments}
When $m=1$, the expected signature of $\mathbf{Y}$ is equivalent to its
moments, since
\[
\mathbf{Y}^{\overbrace{\mbox{\fontsize{8.36pt}{8.36pt}
\selectfont{$(1,\ldots,1)$}}}^{\mbox{\fontsize{8.36pt}{8.36pt}\selectfont{$m$}}}}_{0,T}
= \frac{1}{m!}(Y_T-Y_0)^m.
\]
When $m=2$, one example is to consider the word $\tau= (1,2)$. Then,
one needs to compute the iterated integral (or an approximation of, if
the path is discretely observed)
\[
\mathbf{Y}^{(1,2)}_{0,T}(\omega_i) = \int_0^T \int_0^s dY_u^{(1)}(\omega
_i) \,dY_s^{(2)}(\omega_i)
\]
for each path $Y_t(\omega_i) = ( Y_t^{(1)}(\omega_i),
Y_t^{(1)}(\omega_i) )$, for $i=1,\ldots,N$. Then
\[
M^{(1,2)}_N := \frac{1}{N}\sum_{i=1}^N \mathbf{Y}^{(1,2)}_{0,T}(\omega_i).
\]
Note that this is closely related to the correlation of the two
one-dimensional paths $\{Y_t^{(1)}\}_{t\in[0,T]}$ and $\{Y_t^{(2)}\}
_{t\in[0,T]}$ since, by the shuffle product,
\[
\mathbf{Y}^{(1,2)}_{0,T}(\omega_i) + \mathbf{Y}^{(2,1)}_{0,T}(\omega_i) =
\mathbf{Y}^{(1)}_{0,T}(\omega_i) \mathbf{Y}^{(2)}_{0,T}(\omega_i)
\]
and by the law of large numbers,
\[
\lim_{N\rightarrow\infty} \bigl( M^{(1,2)}_N + M^{(2,1)}_N \bigr) =
{\mathbb E}\bigl( \bigl(Y_T^{(1)}-Y_0^{(1)}\bigr)\bigl( Y_T^{(2)}-Y_0^{(2)}\bigr) \bigr).
\]
\end{remark}

Several questions arise:
\begin{longlist}[(iii)]
\item[(i)] How can we get an analytic expression for $E^\tau(\theta)$
as a function of $\theta$?
\item[(ii)] What is a good choice for $V$ or, for $m=1$, how do we
choose which moments to match?
\item[(iii)] How good is $\hat{\theta}$ as an estimate?
\end{longlist}
We will try to answer these questions below.

\vspace*{3pt}\subsection{Computing the theoretical expected signature}

We want to get an analytic expression for the expected signature of the
$p$-rough path $\mathbf{Y}$ at $(0,T)$, where $\mathbf{Y}$ is the solution of
(\ref{mymain}) in the sense described above. In other words, we want to
compute (\ref{theoreticalE}). We are given the expected signature of
the $p$-rough path $\mathbf{X}$ which is driving the equation, again at
$(0,T)$, that is, we are given
\[
{\mathbb E}( {\mathbf X}_{0,T}^\sigma)\qquad \forall\sigma
\in\{1,\ldots,n\}^k,\qquad k\in{\mathbb N}.
\]
In addition, we know the vector field $f_\theta(y) = f(y;\theta)$ in
(\ref{mymain}), up to parameter~$\theta$ and we know that it is polynomial.

It turns out that we cannot compute (\ref{theoreticalE}), in general.
We need to make one more approximation since the solution $\mathbf{Y}$
will not usually be available: we will approximate the solution by the
$r$th Picard iteration $\mathbf{Y}(\bfr)$, described in the Universal
Limit theorem (Theorem \ref{universallimittheorem}). Finally, we will
approximate the expected signature of the solution corresponding to a
word $\tau$, $E^\tau(\theta)$, by the expected signature of the $r$th
Picard iteration at $\tau$, which we will denote by $E^\tau
_r(\theta)$:
%
\begin{equation}
\label{theoreticalaverage}
E^\tau_r(\theta) := {\mathbb E}_\theta(\mathbf{Y}(\bfr)^\tau_{0,T}).
\end{equation}
The good news is that when $f_\theta$ is a polynomial of degree $q$ on
$y$, for any $q\in\N$, the $r$th Picard iteration of the
solution is a linear combination of iterated integrals of the driving
force $\mathbf{X}$. More specifically, for any realization $\omega$ and
any time interval $(s,t)\in\Delta_T$, we can write
%
\begin{equation}
\label{picarditeration}
\mathbf{Y}(\bfr)^\tau_{s,t} = \sum_{|\sigma|\leq|\tau| ({q^r-1})/({q-1})}
\alpha^\tau_{r,\sigma}(y_0,s;\theta)\mathbf{X}^\sigma_{s,t},
\end{equation}
where $\alpha^\tau_{r,\sigma}(y;\theta)$ is a polynomial in $y$ of
degree $q^r$ and \mbox{$|\cdot|$} gives the length of a word. Thus,
%
\begin{equation}
\label{expectedpicarditeration}
E^\tau_r(\theta) = \sum_{|\sigma|\leq|\tau| ({q^r-1})/({q-1})} \alpha
^\tau_{r,\sigma}(y_0,s;\theta){\mathbb E}(\mathbf{X}^\sigma
_{s,t}).
\end{equation}


We will prove (\ref{picarditeration}), first for $p=1$ and then for any
$p\geq1$ by taking limits with respect to $d_p$. We will need the
following lemma.
\begin{lemma}
\label{ontau}
Suppose that $\mathbf{X}\in G\Omega_1(\R^n)$, $\mathbf{Y}\in G\Omega_1(\R^m)$
and it is possible to write
%
\begin{equation}
\label{length1}
\mathbf{Y}_{s,t}^{(j)} = \sum_{\sigma\in W_n, q_1\leq|\sigma|\leq q_2}
\alpha_\sigma^{(j)}(y_s) \mathbf{X}^\sigma_{s,t}\qquad \forall(s,t)\in\Delta
_T \mbox{ and }  \forall j=1,\ldots,m,\hspace*{-28pt}
\end{equation}
where $\alpha^{(j)}_\sigma\dvtx\R^m\rightarrow{\mathrm L}(\R,\R)$ is a
polynomial of degree $q$ with $q,q_1,q_2\in\N$ and $q_1\geq1$. Then
%
\begin{equation}
\label{anylength}
\mathbf{Y}_{s,t}^{\tau} = \sum_{\sigma\in W_n, |\tau|q_1 \leq|\sigma|\leq
|\tau|q_2} \alpha_\sigma^{\tau}(y_s) \mathbf{X}^\sigma_{s,t}
\end{equation}
for all $(s,t)\in\Delta_T$ and $\tau\in W_m$. $\alpha^{\tau}_\sigma\dvtx\R
^m\rightarrow{\mathrm L}(\R,\R)$ are polynomials of $\mbox{degree} \leq q|\tau|$.
\end{lemma}
\begin{pf}
We will prove (\ref{anylength}) by induction on $|\tau|$, that is, the
length of the word. By hypothesis, it is true when $|\tau|=1$. Suppose
that it is true for any $\tau\in W_m$ such that $|\tau|= k\geq1$.
First, note that from (\ref{length1}), we get that
\[
dY^{(j)}_u = \sum_{\sigma\in W_n, q_1\leq|\sigma|\leq q_2} \alpha
_\sigma^{(j)}(y_s) \mathbf{X}^{\sigma-}_{s,u}
\,dX^{\sigma_\ell}_u\qquad
\forall u\in[s,t],
\]
where $\sigma-$ is the word $\sigma$ without the last letter and $\sigma
_\ell$ is the last letter. For example, if $\sigma= (i_1,\ldots
,i_{b-1},i_b)$, then $\sigma-=(i_1,\ldots,i_{b-1})$ and $\sigma_\ell=
i_b$. Note that this cannot be defined when $\sigma$ is the empty word
$\varnothing$ ($b=0$). Now suppose that $|\tau| = k+1$, so $\tau=
(j_1,\ldots,j_k,j_{k+1})$ for some $j_1,\ldots,j_{k+1}\in\{1,\ldots,m\}$. Then
\begin{eqnarray*}
\mathbf{Y}_{s,t}^{\tau} &=& \int_s^t \mathbf{Y}_{s,u}^{\tau-}\,
dY^{(j_{k+1})}_u \\
&=& \int_s^t \biggl(\sum_{k q_1 \leq|\sigma_1|\leq k q_2} \alpha_{\sigma
_1}^{\tau-}(y_s) \mathbf{X}^{\sigma_1}_{s,u}\biggr) \sum_{q_1\leq|\sigma
_2|\leq q_2} \alpha_{\sigma_2}^{(j_{k+1})}(y_s) \mathbf{X}^{{\sigma
_2}-}_{s,u} \,dX^{\sigma_{2_\ell}}_u \\
&=& \sum_{k q_1 \leq|\sigma_1|\leq k q_2, q_1\leq|\sigma_2|\leq q_2}
\bigl( \alpha_{\sigma_1}^{\tau-}(y_s) \alpha_{\sigma
_2}^{(j_{k+1})}(y_s) \bigr) \int_s^t \mathbf{X}^{\sigma_1}_{s,u}
\mathbf{X}^{{\sigma_2}-}_{s,u} \,dX^{\sigma_{2_\ell}}_u.
\end{eqnarray*}
Now we use the fact that for any geometric rough path $\mathbf{X}$ and any
$(s,u)\in\Delta_T$, we can write
%
\begin{equation}
\label{product}
\mathbf{X}^{\sigma_1}_{s,u} \mathbf{X}^{{\sigma_2}-}_{s,u} = \sum_{\sigma\in
\sigma_1\sqcup(\sigma_2-)}\mathbf{X}^{\sigma}_{s,u},
\end{equation}
where $\sigma_1\sqcup(\sigma_2-)$ is the shuffle product between the
words $\sigma_1$ and $\sigma_2-$. Applying (\ref{product}) above, we get
\[
\mathbf{Y}_{s,t}^{\tau} = \sum_{\sigma\in W_n, (k+1)q_1\leq|\sigma|\leq
(k+1)q_2} \alpha_{\sigma}^{\tau}(y_s) \mathbf{X}^{\sigma}_{s,t},
\]
where
\[
\alpha_{\sigma}^{\tau}(y_s) = \sum_{(\sigma_1\sqcup\sigma_2-)\ni\sigma
-, \sigma_\ell=\sigma_{2_\ell}} \alpha^{\tau-}_{\sigma_1}(y_s) \alpha
^{\tau_\ell}_{\sigma_2} (y_s)
\]
is a polynomial of $\mbox{degree} \leq kq+q = (k+1)q$. Note that the above sum
is over all $\sigma_1,\sigma_2\in W_n$ such that $k q_1\leq|\sigma
_1|\leq k q_2$ and $q_1\leq|\sigma_1|\leq q_2$.
\end{pf}

We now prove (\ref{picarditeration}) for $p=1$.
\begin{lemma}
\label{onr}
Suppose that $\mathbf{X}\in G\Omega_1(\R^n)$ is driving system (\ref
{main}), where $f\dvtx\R^m\rightarrow{\mathrm L}(\R^n, \R^m)$ is a polynomial
of degree $q$. Let $\mathbf{Y}(\bfr)$ be the projection of the $r$th
Picard iteration $\mathbf{Z}(\bfr)$ to $\R^m$, as described above. Then,
$\mathbf{Y}(\bfr)\in G\Omega_1(\R^m)$ and it satisfies
%
\begin{equation}
\label{1roughpaths}
\mathbf{Y}(\bfr)^\tau_{s,t} = \sum_{|\sigma|\leq|\tau| ({q^r-1})/({q-1})}
\alpha^\tau_{r,\sigma}(y_0,s)\mathbf{X}^\sigma_{s,t}
\end{equation}
for all $(s,t)\in\Delta_T$ and $\tau\in W_m$. $\alpha^\tau_{r,\sigma
}(y,s)$ is a polynomial of $\mbox{degree} \leq|\tau|q^r$ in $y$.
\end{lemma}
\begin{pf}
For every $r\geq0$, $\mathbf{Z}(\bfr)\in G\Omega_1(\R^{n+m})$ since
$\mathbf{Z(0)} := (\mathbf{X},\mathbf{e})$, $\mathbf{X}\in
G\Omega_1(\R^n)$, and integrals preserve the roughness of the
integrator. So, $\mathbf{Y}(\bfr)\in G\Omega _1(\R^{m})$. We will prove
the claim by induction on $r$.

For $r=0$, $\mathbf{Y(0)} = \mathbf{e}$ and thus (\ref{1roughpaths}) becomes
\[
\mathbf{Y(0)}^\tau_{s,t} = \alpha_{0,\varnothing}^\tau(y_0,s)
\]
and it is true for $\alpha_{0,\varnothing}^\varnothing\equiv1$ and
$\alpha_{0,\varnothing}^\tau\equiv0$ for every $\tau\in W_m$ such
that $|\tau|>0$.

Now suppose it is true for some $r\geq0$. Remember that $\mathbf{Z}(\bfr) =
( \mathbf{X}, \mathbf{Y}(\bfr) )$ and that $\mathbf{Z}(\bfr+1)$ is defined by
\[
\mathbf{Z}(\bfr+1) = \int h(\mathbf{Z}(\bfr)) \,d\mathbf{Z}(\bfr),
\]
where $h$ is defined in (\ref{h}) and $f_{y_0}(y) = f(y_0+y)$. Since
$f$ is a polynomial of degree $q$, $h$ is also a polynomial of degree
$q$ and, thus, it is possible to write
%
\begin{equation}
\label{Taylorh}
h(z_2) = \sum_{k=0}^q h_k(z_1)\frac{(z_2 - z_1)^{\otimes k}}{k!}\qquad
\forall z_1,z_2\in\R^{\ell},
\end{equation}
where $\ell= n+m$. Then, the integral is defined to be
\[
\mathbf{Z}(\bfr+1)_{s,t} := \int_s^t h(\mathbf{Z}(\bfr)) \,d\mathbf{Z}(\bfr) = \sum_{k=0}^q
h_k(Z(r)_s) \mathbf{Z}(\bfr)^{k+1}_{s,t}\qquad \forall(s,t)\in\Delta_T.
\]
Let's take a closer look at functions $h_k\dvtx\R^\ell\rightarrow{\mathrm L}( {\R^\ell}^{\otimes k}, {\mathrm L}(\R^\ell, \R^\ell))$.
Since (\ref{Taylorh}) is the Taylor expansion for polynomial $h$, $h_k$
is the $k$th derivative of $h$. So, for every word $\beta\in
W_\ell$ such that $|\beta| = k$ and every $z = (x,y)\in\R^\ell$, $
( h_k(z) )^\beta= \partial_\beta h(z)\in{\mathrm L}(\R^\ell,\R^\ell
)$. By definition, $h$ is independent of $x$ and thus the derivative
will always be zero if $\beta$ contains any letters in $\{1,\ldots,n\}$.

Remember that $\mathbf{Y}(\bfr+1)$ is the projection of $\mathbf{Z}(\bfr+1)$ onto
$\R^m$. So, for each $j\in\{1,\ldots,m\}$,
%
\begin{eqnarray}
\label{picardformula}
\mathbf{Y}(\bfr+1)^{(j)}_{s,t} &=& \mathbf{Z}(\bfr+1)^{(n+j)}_{s,t}
= \sum
_{k=0}^q ( h_k(Z(r)_s) \mathbf{Z}(\bfr)^{k+1}_{s,t}
)^{(n+j)} \nonumber\\
&=& \sum_{i=1}^\ell\sum_{\tau\in W_m(0,q)} \partial_{\tau+n}
h_{n+j,i}(Z(r)_s) \mathbf{Z}(\bfr)^{(\tau+n,i)}_{s,t} \\
&=& \sum_{i=1}^n \sum_{\tau\in W_m(0,q)} \partial_\tau f_{j,i}
\bigl(y_0+Y(r)_s\bigr) \mathbf{Y}(\bfr)^{(\tau,i)}_{s,t},\nonumber
\end{eqnarray}
where $W_m(k_1,k_2) = \{\tau\in W_m ; k_1\leq|\tau|\leq k_2\}$ for
any $k_1,k_2\in\N$, that is, it is the set of all words of length
between $k_1$ and $k_2$. By the induction hypothesis, we know that for
every $\tau\in W_m$,
\[
\mathbf{Z}(\bfr)^{\tau+n}_{s,t} = \mathbf{Y}(\bfr)^{\tau}_{s,t} =
\sum_{|\sigma |\leq|\tau| ({q^r-1})/({q-1})}
\alpha^\tau_{r,\sigma}(y_0,s)\mathbf{X}^\sigma_{s,t}
\]
and thus, for every $i=1,\ldots,n$,
%
\begin{equation}
\label{Zinpicard}
\mathbf{Z}(\bfr)^{(\tau+n,i)}_{s,t} = \sum_{|\sigma|\leq|\tau|
({q^r-1})/({q-1})} \alpha^\tau_{r,\sigma}(y_0,s)\mathbf{X}^{(\sigma,i)}_{s,t}.
\end{equation}
Putting this back to the equation above, we get
\[
\mathbf{Y}(\bfr+1)^{(j)}_{s,t} = \sum_{i=1}^n \sum_{|\tau| \leq q} \partial
_\tau f_{j,i}\bigl(y_0+Y(r)_s\bigr)\sum_{|\sigma|\leq|\tau|
({q^r-1})/({q-1})} \alpha^\tau_{r,\sigma}(y_0,s)\mathbf{X}^{(\sigma,i)}_{s,t}
\]
and by reorganizing the sums, we get
%
\begin{equation}
\label{picardformula2}\quad
\mathbf{Y}(\bfr+1)^{(j)}_{s,t} = \sum_{|\sigma|\leq q({q^r-1})/({q-1}) + 1 =
({q^{r+1}-1})/({q-1})}\alpha_{r+1,\sigma}^{(j)}(y_0,s)\mathbf{X}^\sigma_{s,t},
\end{equation}
where $\alpha_{r+1,\varnothing}^{(j)}\equiv0$ and for every $\sigma
\in W_n-\varnothing$,
\[
\alpha_{r+1,\sigma}^{(j)}(y_0,s) = \sum_{{|\sigma
-|(q-1)}/({q^r-1})\leq|\tau| \leq q}\partial_\tau f_{j,\sigma_\ell
}\bigl(y_0+Y(r)_s\bigr)\alpha^\tau_{r,\sigma-}(y_0,s).
\]
If $\alpha_{r,\sigma}^\tau$ are polynomials of $\mbox{degree}\!\leq\!|\tau|q^r$,
then $\alpha_{r,\sigma}^{(j)}$ are polynomials of $\mbox{degree}\!\leq\! q^r$.
The result\vspace*{1pt} follow by applying Lemma \ref{ontau}. Notice that (in the
notation of Lemma \ref{ontau}) $q_1\geq1$ since $\alpha
_{r+1,\varnothing}^{(j)}\equiv0$.\vspace*{-3pt}
\end{pf}

We will now prove (\ref{picarditeration}) for any $p\geq1$.\vspace*{-3pt}

\begin{theorem}
\label{linearrelation}
The result of Lemma \ref{onr} still holds when $\mathbf{X}\in G\Omega_p(\R
^n)$, for any $p\geq1$.\vspace*{-3pt}
\end{theorem}

\begin{pf}
Since $\mathbf{X}\in G\Omega_p(\R^n)$, there exists a sequence $\{\mathbf{X}(\bfk)\}_{k\geq0}$ in\break $G\Omega_1(\R^n)$,
such that $\mathbf{X}(\bfk)\stackrel
{k\rightarrow\infty}{\rightarrow} \mathbf{X}$ in the $p$-variation
topology. We denote by $\mathbf{Z}(\bfk,\bfr)$ and $\mathbf{Z}(\bfr)$ the $r$th
Picard iteration corresponding to equation (\ref{main}) driven by $\mathbf{X}(\bfk)$ and $\mathbf{X}$, respectively.

First, we show that $\mathbf{Z}(\bfk,\bfr)\stackrel{k\rightarrow\infty
}{\rightarrow} \mathbf{Z}(\bfr)$ and consequently $\mathbf{Y}(\bfk,\bfr)\stackrel
{k\rightarrow\infty}{\rightarrow} \mathbf{Y}(\bfr)$ in the $p$-variation
topology, for every $r\geq0$. It is clearly true for $r=0$. Now
suppose that it is true for some $r\geq0$. By definition, $\mathbf{Z}(\bfr+1) = \int h(\mathbf{Z}(\bfr))\,d\mathbf{Z}(\bfr)$. Remember that the integral is
defined as the limit in the $p$-variation topology of the integrals
corresponding to a sequence of 1-rough paths that converge to $\mathbf{Z}(\bfr)$ in the $p$-variation topology. By the induction hypothesis, this
sequence can be $\mathbf{Z}(\bfk,\bfr)$. It follows that $\mathbf{Z}(\bfk,\bfr+1)= \int
h(\mathbf{Z}(\bfk,\bfr))\,d\mathbf{Z}(\bfk,\bfr)$ converges to $\mathbf{Z}(\bfr+1)$, which proves
the claim. Convergence of the rough paths in $p$-variation topology
implies convergence of each of the iterated integrals, that is,
\[
\mathbf{Y}(\bfk,\bfr)_{s,t}^\tau\stackrel{k\rightarrow\infty}{\rightarrow} \mathbf{Y}(\bfr)_{s,t}^\tau
\]
for all $r\geq0$, $(s,t)\in\Delta_T$ and $\tau\in W_m$.

By Lemma \ref{onr}, since $\mathbf{X}(\bfk)\in G\Omega_1(\R^n)$ for every
$k\geq1$, we can write
\[
\mathbf{Y}(\bfk,\bfr)^\tau_{s,t} = \sum_{|\sigma|\leq|\tau| ({q^r-1})/({q-1})}
\alpha^\tau_{r,\sigma}(y_0,s)\mathbf{X}(\bfk)^\sigma_{s,t}
\]
for every $\tau\in W_m$, $(s,t)\in\Delta_T$ and $k\geq1$. Since $\mathbf{X}(\bfk)\stackrel{k\rightarrow\infty}{\rightarrow} \mathbf{X}$ in the
$p$-variation topology and the sum is finite, it follows that
\[
\mathbf{Y}(\bfk,\bfr)_{s,t}^\tau\stackrel{k\rightarrow\infty}{\rightarrow} \sum
_{|\sigma|\leq|\tau| ({q^r-1})/({q-1})} \alpha^\tau_{r,\sigma
}(y_0,s)\mathbf{X}^\sigma_{s,t}.
\]
The statement of the theorem follows.
\end{pf}

\subsection{The expected signature matching estimator}

We can now give a precise definition of the estimator, which we will
formally call the \textit{expected signature matching estimator} (ESME):
suppose that we are in the setting of the problem described in Section
\ref{problem} and $M^\tau_N$ and $E^\tau_r(\theta)$ are defined as in
(\ref{empiricalaverage}) and (\ref{theoreticalaverage}), respectively,
for every $\tau\in W_m$. Let $V\subset W_m$ be a set of $d$ words
constructed from the alphabet $\{1,\ldots,m\}$. For each such $V$, we
define the ESME $\hat{\theta}_{r,N}^V$ as the solution to
%
\begin{equation}
\label{systemofequations}
E^\tau_r(\theta) = M^\tau_N\qquad \forall\tau\in V.
\end{equation}
This definition requires that (\ref{systemofequations}) has a \textit{unique} solution. This will not be true in general. Let $\mathcal{V}_r$ be
the set of all $V$ containing $d$ words, such that $E^\tau_r(\theta) =
M$, $\forall\tau\in V$, has a unique solution for all $M\in
S_\tau\subseteq\R$ where $S_\tau$ is the set of all possible values of
$M^\tau_N$, for any $N\geq1$. We will assume the following.

\begin{assumption}[(Observability)]
\label{observability}
The set $\mathcal{V}_r$ is nonempty and known (at least up to a
nonempty subset).
\end{assumption}

Then $\hat{\theta}_{r,N}^V$ can be defined for every $V\in\mathcal{V}_r$.

\begin{remark}
\label{augmentingthestate}
In order to achieve uniqueness of the estimator, we might need some
extra information that we could get by looking at time correlations. We
can fit this into our framework by considering scaled versions of~(\ref
{mymain}) together with the original one: for example, consider the equation
\begin{eqnarray*}
dY_t(\omega) &=& f(Y_t(\omega);\theta)\cdot dX_t(\omega),\qquad Y_0 = y_0, \\
dY(c)_{t}(\omega) &=& f(Y(c)_{t}(\omega);\theta)\cdot
dX_{ct}(\omega),\qquad
Y(c)_{0} = y_0,
\end{eqnarray*}
for some appropriate constant $c$. Then $Y(c)_t = Y_{ct}$ and the
expected signature at $[0,T]$ will also contain information about
${\mathbb E}( Y^{(j_1)}_{T}Y^{(j_2)}_{cT})$ for any $j_1,j_2
= 1,\ldots, m$.
\end{remark}

It is very difficult to say anything about the solutions of system (\ref
{systemofequations}), as it is very general. However, under the
assumption that $f$ is also a polynomial in $\theta$, (\ref
{systemofequations}) becomes a system of polynomial equations and the
problem of identifiability becomes equivalent to the problem of
existence and uniqueness of solutions for that system.

\subsection{Properties of the ESME}

It is possible to show that the ESME defined as the solution of (\ref
{systemofequations}) will converge to the true value of the parameter
and will be asymptotically normal. More precisely, the following\vadjust{\goodbreak}
holds.
\begin{theorem}
\label{asymptoticnormality}
Let $\hat{\theta}^V_{r,N}$ be the expected signature matching estimator
for the system described in Section \ref{problem} and $V\in\mathcal
{V}_r$. Assume that the expected signature of $\mathbf{Y}_{0,T}$ is finite
and that $f(y;\theta)$ is a polynomial of degree $q$ with respect to
$y$ and twice differentiable with respect to $\theta$. Let $\theta_0$
be the ``true'' parameter value, meaning that the distribution of the
observed signature $\mathbf{Y}_{0,T}$ is ${\mathbb Q}^T_{\theta_0}$,
defined in~(\ref{Q}). Set
%
\begin{equation}
\label{DandS}
D^V_r(\theta)_{i,\tau} = \frac{\partial}{\partial\theta_i} E^\tau
_r(\theta)\quad\mbox{and}\quad \Sigma_V(\theta_0)_{\tau,\tau^\prime}=
\operatorname{cov}(\mathbf{Y}_{0,T}^\tau,\mathbf{Y}_{0,T}^{\tau^\prime} )
\end{equation}
and assume that $\inf_{r>0,\theta\in\Theta}\| D^V_r(\theta)\|>0$, that
is,
$D^V_r(\theta)$ is uniformly nondegenerate with respect to $r$ and
$\theta$.
Then, for $r\propto\log{N}$ and $T$ are sufficiently small,
%
\begin{equation}
\label{consistency}
\hat{\theta}_{r,N}^V \to\theta_0 \qquad\mbox{with probability } 1
\end{equation}
and
%
\begin{equation}
\label{normality}
\sqrt{N}\Phi_V(\theta_0)^{-1}( \hat{\theta}^V_{r,N}-\theta_0
)\stackrel{\mathcal{L}}{\rightarrow}\mathcal{N}(0,I)
\end{equation}
as $N\to\infty$, where
%
\begin{equation}
\label{asymptoticvariance}
\Phi_V(\theta_0) = D^V(\theta_0)^{-1}\Sigma_V(\theta_0)^{1/2}
\end{equation}
with $D^V(\theta)_{i,\tau} = \frac{\partial}{\partial\theta_i} E^\tau
(\theta)$.
\end{theorem}
\begin{pf}
By Theorem \ref{linearrelation} and the definition of $E_r^\tau(\theta)$,
\[
E_r^\tau(\theta) = \sum_{|\sigma|\leq|\tau| ({q^r-1})/({q-1})} \alpha
^\tau_{r,\sigma}(y_0;\theta){\mathbb E}(\mathbf{X}^\sigma_{0,T}),
\]
where functions $\alpha^\tau_{r,\sigma}(y_0;\theta)$ are constructed
recursively, as in Lemmas \ref{ontau} and \ref{onr}. Since $f$ is twice
differentiable with respect to $\theta$, functions $\alpha$ and
consequently $E_r^\tau$ will also be twice differentiable with respect
to $\theta$. Thus, we can write
\[
E_r^\tau(\theta) - E_r^\tau(\theta_0) = D^V_r(\tilde{\theta})_{\cdot
,\tau}(\theta- \theta_0)\qquad \forall\theta\in\Theta\subseteq
\R^d
\]
for some $\tilde{\theta}$ within a ball of center $\theta_0$ and radius
$\|\theta-\theta_0\|$ and the function~$D^V_r(\theta)$ is continuous.
By inverting $D^V_r$ and for $\theta= \hat{\theta}^V_{r,N}$, we get
%
\begin{equation}
\label{onestepTaylor}
( \hat{\theta}^V_{r,N}-\theta_0 ) = D^V_r(\tilde{\theta
}^V_{r,N})^{-1}\bigl( E_r^V(\hat{\theta}^V_{r,N}) - E_r^V(\theta_0)
\bigr),
\end{equation}
where $E_r^V(\theta) = \{ E_r^\tau(\theta)\}_{\tau\in V}$. By definition
%
\begin{equation}
\label{montecarlosum}
E_r^V(\hat{\theta}_{r,N}^V) = \{ M^\tau_N\}_{\tau\in V} = \Biggl\{ \frac
{1}{N}\sum_{i=1}^N \mathbf{Y}_{0,T}^\tau(\omega_i) \Biggr\}_{\tau\in
V},
\end{equation}
where $\mathbf{Y}_{0,T}(\omega_i)$ are independent realizations of the
random variable $\mathbf{Y}_{0,T}$. Suppose that $T$ is small enough, so
that the above\vadjust{\goodbreak} Monte Carlo approximation satisfies both the Law of
Large Numbers and the Central Limit theorem, that is, the covariance
matrix satisfies $0<\|\Sigma_V(\theta_0)\|<\infty$. Then, for
$N\rightarrow\infty$
\[
| E_r^\tau(\hat{\theta}_{r,N}^V) - E^\tau(\theta_0)| = | E_r^\tau(\hat
{\theta}_{r,N}^V)-{\mathbb E}(\mathbf{Y}^\tau_{0,T}
)|\rightarrow0 \qquad \forall\tau\in V
\]
with probability 1. Note that the convergence does not depend on $r$.
Also, for $r\rightarrow\infty$
\[
E^\tau_r(\theta_0) \to E^\tau(\theta_0)
\]
as a result of Theorem \ref{universallimittheorem}. Thus, for $r\propto
\log{N}$
\[
| E_r^\tau(\hat{\theta}_{r,N}^V)-E_r^\tau(\theta_0)|\to0\qquad
\mbox{with probability } 1, \forall\tau\in V.
\]
Combining this with (\ref{onestepTaylor}) and the uniform
nondegeneracy of $D^V_r$, we get~(\ref{consistency}). From (\ref
{consistency}) and the continuity and uniform nondegeneracy of~$D^V_r$, we conclude that
\[
D^V(\theta_0) D^V_r(\tilde{\theta}^V_{r,N})^{-1} \to I\qquad \mbox{with
probability } 1
\]
provided that $T$ is small enough, so that $E^V(\theta_0)<\infty$.
Now, since
\[
\Phi_V(\theta_0)^{-1}( \hat{\theta}^V_{r,N}-\theta_0 ) =
\Sigma_V(\theta_0)^{-1/2}(D^V(\theta_0) D^V_r(\tilde{\theta
}^V_{r,N})^{-1})\bigl( E_r^V(\hat{\theta}^V_{r,N}) - E_r^V(\theta
_0) \bigr)
\]
to prove (\ref{normality}) it is sufficient to prove that
\[
\sqrt{N}\Sigma_V(\theta_0)^{-1/2}\bigl( E_r^V(\hat{\theta}^V_{r,N}) -
E_r^V(\theta_0) \bigr)\stackrel{\mathcal{L}}{\rightarrow}\mathcal
{N}(0,I).
\]
It follows directly from (\ref{montecarlosum}) that
\[
\sqrt{N}\Sigma_V(\theta_0)^{-1/2}\bigl( E_r^V(\hat{\theta}^V_{r,N}) -
E^V(\theta_0) \bigr)\stackrel{\mathcal{L}}{\rightarrow}\mathcal{N}
(0,I).
\]
It remains to show that
\[
\sqrt{N}\Sigma_V(\theta_0)^{-1/2}\bigl( E_r^V(\theta_0) - E^V(\theta_0)
\bigr)\to0.
\]
It follows from Theorem \ref{universallimittheorem} that
\[
\|E_r^V(\theta_0) - E^V(\theta_0)\| \leq C\rho^{-r}
\]
for any $\rho>1$ and sufficiently small $T$. The constant $C$ depends
on $V, p$ and~$T$. Suppose that $r = a\log{N}$ for some $a>0$ and
choose $\rho>\exp{(\frac{1}{2c})}$. Then
\[
\sqrt{N}\bigl\|\bigl( E_r^V(\theta_0) - E^V(\theta_0) \bigr)\bigr\| \leq C
N^{({1/2}-c\log{\rho})},
\]
which proves the claim.
\end{pf}



\section{Extensions}\label{sectionextensions}

In this section, we discuss how to extend the method described in
Section \ref{method} in two different directions. First, we generalize
the ESME by matching linear combinations of the elements of the
signature and considering issues of optimality. Then, we extend
the
method to a different setting where we observe one path of the
signature of the response at many points in time rather than many
independent signatures of the response at one fixed\vadjust{\goodbreak} time $T$.

\subsection{Generalized ESME and discussion of optimality}

In some sense, our method is standard: we assumed that we observe $N$
realizations of the random variable $\mathbf{Y}_{0,T}$ with distribution
${\mathbb Q}^\theta_T$ defined in \eqref{Q}. Then, we estimate~$\theta$
by matching the empirical and theoretical expectation of that random
variable, with the challenging part being computing the theoretical
expectation.\looseness=1

Similarly,\vspace*{1pt} we can generalize the method and study its optimality in the
standard way (see \cite{Hansen}): we consider functions $g
({\mathbf Y}_{0,T},\cdot)\dvtx\Theta\to\R^d$ such that
\[
{\mathbb E}_\theta( g({\mathbf Y}_{0,T},\theta)
) \equiv0.
\]
An obvious choice is
%
\begin{equation}
\label{simplemomentmatching}
g({\mathbf Y}_{0,T},\theta) = \{ {\mathbf Y}_{0,T}^\tau
- {\mathbb E}_\theta( {\mathbf Y}_{0,T}^\tau) \}_{\tau
\in V}
\end{equation}
for $V$ as before. More generally, we can consider linear combinations
of iterated integrals ${\mathbf Y}_{0,T}^\tau$. This is sufficient
since products of iterated integrals can be written as linear
combinations of iterated integrals. Then, the generalized moment
matching estimator is defined as the solution to
\[
\frac{1}{N} \sum_{i=1}^N g({\mathbf Y}_{0,T}(\omega_i),\theta
) = 0.
\]
We define the generalized ESME to be the solution to the system above
with expectations being approximated by the expectations of Picard
iterations. For $g$ defined in \eqref{simplemomentmatching}, we get
back the ESME.

The asymptotically optimal choice of function $g$ among all linear
combinations of iterated integrals ${\mathbf Y}_{0,T}^\tau$ with $\tau
\in V$ is the one minimizing asymptotic variance. The optimization can
be done iteratively: suppose that we want to choose parameters $\{\alpha
_\sigma\}_{\sigma\in V}$ such that the estimator constructed by
solving\looseness=1
\[
\sum_{\sigma\in V} \alpha_\sigma\Biggl( {\mathbb E}_\theta{\mathbf
Y}^\sigma_{0,T} - \frac{1}{N}\sum_{i=1}^N {\mathbf Y}^\sigma
_{0,T}(\omega_i) \Biggr)
\]\looseness=0
in term of $\theta$, has minimal variance among all linear combinations
of ${\mathbf Y}^\sigma_{0,T}$. We go through the following steps:
\begin{longlist}[(4)]
\item[(0)] Choose an initial value $\alpha_\sigma(0)$ for the $\alpha
_\sigma$'s.
\item[(1)] For the current value of the $\alpha_\sigma$'s, solve for
$\theta$.
\item[(2)] Compute the asymptotic variance as a function of the $\alpha
$'s and $\theta$.
\item[(3)] Set the $\alpha$'s equal to the argument minimizing the
asymptotic variance in terms of the $\alpha$'s for $\theta$ the
solution of step (1).
\item[(4)] Go to step (1).
\end{longlist}
This is also discussed in \cite{Hansen}.





\subsection{Observing one path}
Suppose that we observe one realization of the solution of \eqref
{mymain}, namely $\{ \mathbf{Y}_{0,t}(\omega) \}_{0\leq t \leq T}$. We are
going to say that the rough path $\mathbf{Y}$ is ergodic if, for $T\to
\infty$,
%
\begin{equation}
\label{ergodicity1}
\frac{1}{T} \int_0^T \delta_{\mathbf{Y}_{0,t}(\omega)} \,dt \to\mu_{\theta
_0} \mbox{ weakly},\qquad  {\mathbb Q}_{\theta_0}\mbox{-a.s.}
\end{equation}
for $\theta_0$ in the parameter space $\Theta$. The limit $\mu_{\theta
_0}$ is a distribution on the space of geometric rough paths $G\Omega
_p(\R^m)$ and we call it \textit{the invariant distribution}. Then, if
$\mathbf{Y}_0\sim\mu_{\theta_0}$, the process will be stationary. In
particular, for all $t\geq0$ and words $\tau\in W_m$,
%
\begin{equation}
\label{ergodicity2}
{\mathbb E}_{\theta_0}( \mathbf{Y}_{0,t}^\tau) = {\mathbb
E}_{\theta_0}( \mathbf{Y}_{0}^\tau),
\end{equation}
where the expectation ${\mathbb E}_{\theta_0}$ is with respect to $\mu
_{\theta_0}$.Thus, for $T$ large and any $S\geq0$
%
\begin{equation}
\label{ergodicity3}
\frac{1}{T} \int_0^T \mathbf{Y}_{0,t}^\tau(\omega)\,dt \approx{\mathbb
E}_{\theta_0} ( \mathbf{Y}_{0,S}^\tau),\qquad  {\mathbb Q}_{\theta
_0}\mbox{-a.s.},
\end{equation}
where the left-hand side can be computed from the observations and the
right-hand side is a function of $\theta_0$. However, as before, the
expectation ${\mathbb E}_{\theta_0}( \mathbf{Y}_{0,S}^\tau)$
will not be know in general. We approximate it using \eqref
{picarditeration}. We get
\[
{\mathbb E}_{\theta_0}( \mathbf{Y}(\bfr)_{0,S}^\tau) \approx\sum
_{|\sigma|\leq|\tau| ({q^r-1})/({q-1})} {\mathbb E}_{\theta_0}
(\alpha^\tau_{r,\sigma}(\mathbf{Y}_0,0;\theta)){\mathbb E}(\mathbf{X}^\sigma_{0,S}).
\]
According to Theorem \ref{linearrelation}, functions $\alpha^\tau
_{r,\sigma}(y,0;\theta)$ are polynomials in $y$ of degree $|\tau| q^r$
where $q$ is the degree of polynomial $f$ in \eqref{mymain} with
respect to~$y$. Thus, we write
\[
{\mathbb E}_{\theta_0}(\alpha^\tau_{r,\sigma}(\mathbf{Y}_0,0;\theta
)) = {\mathbb E}_{\theta_0}\Biggl(\sum_{k=1}^{|\tau| q^r} \mathbf{Y}_0^k \cdot c_k^{r,\sigma,\tau}(\theta)\Biggr) = \sum_{k=1}^{|\tau|
q^r} {\mathbb E}_{\theta_0}(\mathbf{Y}_0^k) \cdot c^{r,\sigma
,\tau}_k(\theta).
\]
The expectation ${\mathbb E}_{\theta_0}(\mathbf{Y}_0^k)$ is
still unknown but can be approximated using~\eqref{ergodicity1}. We end
up with the equation
\begin{eqnarray*}
&&\frac{1}{T} \int_0^T \mathbf{Y}_{0,t}^\tau(\omega)\,dt \\
&&\qquad\approx\sum_{|\sigma
|\leq|\tau| ({q^r-1})/({q-1})} \Biggl(
\sum_{k=1}^{|\tau| q^r} \biggl(
\frac{1}{T} \int_0^T \mathbf{Y}_{0,t}^k(\omega)\,dt
\biggr)\cdot c_k^{r,\sigma,\tau}(\theta)
\Biggr){\mathbb E}(\mathbf{X}^\sigma_{0,S}).
\end{eqnarray*}
The coefficients $c_k^{r,\sigma,\tau}(\theta)$ are polynomials with
respect to $\theta$. By considering several different words $\tau\in
W_m$, we construct a polynomial system of equations of $\theta$. As
before, if $\hat{\theta}$ is a solution of the system, we call it the
\textit{expected signature matching\vadjust{\goodbreak} estimator}.

Note that $S$ and $T$ do not need to be the same. In fact, $T$ should
be large so that \eqref{ergodicity3} holds while $S$ needs to be small
in order for the local approximation of the expectation by Picard
iterations to be valid.
\begin{remark}
At the moment, there is no unified theory of ergodicity for rough
paths. Some interesting results in this direction can be found in \cite
{Hairer}. Note that we have assumed that the system is initialized by a
rough path $\mathbf{Y}_0$ rather than a point $Y_0\in\R^m$. This is
consistent with the results in \cite{Hairer}, where the authors point
our the need to consider all the past $\{ Y_t ; -\infty<t\leq0 \}$ as
an initializer of the system in order to make sense of ergodicity.
\end{remark}

\section{Examples}\label{sec5}

In this section, we use the ESME in specific examples of diffusions and
fractional diffusions. The code that was used in these examples is
written in \textit{Mathematica} and can be found in
\href{http://chrisladroue.com/software/brownian-motion-and-iterated-integrals-on-mathematica/}{http://chrisladroue.com/}
\href{http://chrisladroue.com/software/brownian-motion-and-iterated-integrals-on-mathematica/}{software/brownian-motion-and-iterated-integrals-on-mathematica/}.
It can\break be~used to generate more
examples corresponding to different choices of drift and diffusion coefficient.

\subsection{Diffusions}

First, we apply the ESME to estimate the parameters of the following
Stratonovich SDE:
%
\begin{equation}
\label{diffusionexample}
dY_t = a(1-Y_t) \,dX^{(1)}_t+b Y^2_t \,dX^{(2)}_t,\qquad Y^{(1)}_0=0,
\end{equation}
where $X^{(1)}_t = t$ and $X^{(2)}_t = W_t$. We chose an SDE because
the expected signature of $(t,W_t)$ can easily be computed explicitly.

After three Picard iterations and replacing the expected signature of
$(t,W_t)$ by its value (see \cite{Kloeden-Platen,LadroueExpectation}),
we get
\begin{eqnarray*}
{\mathbb E}\bigl(\mathbf{Y}(3)^{(1)}_{0,t}\bigr) & = &
a t-\frac{a^2 t^2}{2}+\frac{a^3 t^3}{6}+\frac{1}{4} a^3 b^2 t^4-\frac
{1}{10} a^4 b^2 t^5,\\
{\mathbb E}\bigl(2 \mathbf{Y}(3)^{(1,1)}_{0,t}\bigr) & = &
a^2 t^2-a^3 t^3+\frac{7 a^4 t^4}{12}-\frac{a^5 t^5}{6}+\frac{7}{10} a^4
b^2 t^5\\
&&{}+\frac{a^6 t^6}{36}-\frac{17}{20} a^5 b^2 t^6+ \frac{191}{420} a^6 b^2 t^7\\
&&{} -\frac{11}{105} a^7 b^2 t^8+\frac
{21}{80} a^6 b^4 t^8+\frac{1}{144} a^8 b^2 t^9\\
&&{}-\frac{43}{180} a^7 b^4
t^9 + \frac{33}{700} a^8 b^4 t^{10}+\frac{1}{50} a^8 b^6 t^{11}.
\end{eqnarray*}
This gives us an approximation of the moments of the solution as
polynomials of the parameters.

The empirical moments are computed from the data. We generate $2\mbox{,}000$
approximate realizations of paths of the solution using Milstein's
method with discretization step $0.001$. We use\vadjust{\goodbreak} these paths to
approximate the iterated integrals over the interval $[0,\frac{1}{4}]$.
We use the values $a=1$ and $b=2$. Then we get an approximation to the
empirical moments at $T=\frac{1}{4}$ by averaging the different
realizations of the iterated integrals of $\mathbf{Y}_{[0,{1/4}]}$.

Finally,\vspace*{1pt} by equating the empirical and theoretical approximations to
the moments for $t = \frac{1}{4}$, we get a system of polynomials of
$(a,b)$ of degree $14$. We get two exact real solutions to this system:
$(0.996353, -2.12892)$ and $(0.996353, 2.12892)$. As expected, the sign
of $b$ cannot be identified. The estimates are very close to the true values.

%
\begin{figure}

\includegraphics{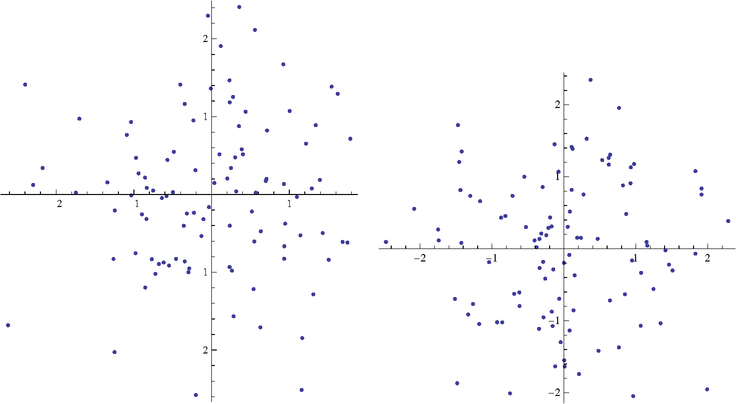}

\caption{100 realizations of the expected signature matching estimator,
after centering and normalizing by the asymptotic variance.
\textup{Left:} from fractional Brownian motion paths (Hurst $\mbox{index}=11/24$),
\textup{right:} from Brownian motion paths.}
\label{fignormalized}
\end{figure}

We repeat this process $100$ times and get $100$ different estimates of
$(a,b)$. In figure, we normalize the $100$ positive solutions by the
asymptotic variance~(\ref{asymptoticvariance}), where $D^V_r(\theta
)_{i,\tau}$ and $\Sigma_V(\theta_0)_{\tau,\tau^\prime}$ in (\ref
{DandS}) are computed, the first using approximation of the theoretical
moments from Picard iterations and the second is computed from the data
by Monte Carlo. The normalized estimates are shown in Figure~\ref
{fignormalized} (right). Their covariance matrix is
\[
\pmatrix{0.97172 & 0.0243445 \cr 0.0243445 & 0.954654},
\]
which is very close to the identity.



\subsection{Fractional diffusions}

We now apply the ESME to estimate the parameters of the differential
equation driven by fractional diffusion with Hurst parameter $h>1/4$.
We choose the same vector field as before. Let
%
\begin{equation}
\label{fBMexample}
dY_t = a(1-Y_t) \,dX^{(1)}_t+b Y^2_t \,dX^{(2)}_t,\qquad
Y^{(1)}_0=0,
\end{equation}
where $X^{(1)}_t = t$ and $X^{(2)}_t = B^h_t$, where $B^h_t$ is
fractional Brownian motion with Hurst parameter $h$. Fractional
Brownian motion generalizes Brownian motion, in the sense that it is a
self-similar Gaussian process. It is defined as the Gaussian process
with correlation given by
\[
{\mathbb E}( B^h_s B^h_t ) = \tfrac{1}{2}( |s|^{2h} +
|t|^{2h} - |t-s|^{2h}).
\]
Clearly, for $h=\frac{1}{2}$ we get independent intervals and Brownian
motion. For $h>\frac{1}{2}$ the intervals are positively correlated and
``smoother'' than Brownian motion while for $h<\frac{1}{2}$ they are
negatively correlated and they get more and more ``rough'' as $h$ gets
smaller. In particular, the paths of fractional Brownian motion possess
finite $p$-variation for every $p>\frac{1}{h}$.

Defining integration with respect to fractional Brownian motion is
necessary in order for (\ref{fBMexample}) to make sense. This is
nontrivial and it is a very active area of research. One of the most
successful approach is given by rough paths---but it is limited to
$h>\frac{1}{4}$ (see \cite{Terrybook} or \cite{Unterberger} for a more
recent approach), that is, to paths of finite $p$-variation for $p<4$.

Having defined (\ref{fBMexample}) as a differential equation driven by
the rough path~$(t,\allowbreak B^h_t)$, we can proceed to estimate the parameters
$a$ and $b$. As in the diffusion case, we first construct an
approximation to the theoretical moments, using Picard iterations. One
difference is that up to this moment, an analytic expression for the
expected signature is not known. Instead, we get a~numerical
approximation by simulating many paths of fractional Brownian motion,
computing their iterated integral and then averaging.

We need to set some parameters: we choose $T=\frac{1}{4}$ as before and
$h=\frac{11}{24}$. We use $1\mbox{,}000$ paths of fractional Brownian motion
with Hurst parameter $h=\frac{11}{24}$---these are exact simulations
with discretization step $10^{-3}$---to compute the iterated integrals
appearing in the Picard iteration and then average to approximate their
expectations. We get the following formulas for the theoretical
approximation of the first two moments of the response $Y$:
\begin{eqnarray*}
{\mathbb E}\bigl(\mathbf{Y}(3)^{(1)}_{0,{1/4}}\bigr) & = & 0.25 a-0.03125
a^2+0.00260417 a^3\\
&&{}+0.00044726 a^2 b-0.000111815 a^3 b\\
&&{}
+4.97138\times10^{-6} a^4 b+0.00116494 a^3 b^2\\
&&{}-0.000115953 a^4
b^2+2.53676\times10^{-6} a^4 b^3,\\
{\mathbb E}\bigl(2 \mathbf{Y}(3)^{(1,1)}_{0,{1/4}}\bigr) & = & 0.0625
a^2-0.015625 a^3+0.00227865 a^4\\
&&{}-0.00016276 a^5
+6.78168\times10^{-6}
a^6\\
&&{}+0.00022363 a^3 b-0.0000838612 a^4 b\\
&&{}+0.0000118036 a^5 b-8.93081\times
10^{-7} a^6 b\\
&&{} +2.58926\times10^{-8} a^7 b+0.000814373 a^4 b^2\\
&&{}-0.000246738 a^5
b^2+0.000033084 a^6 b^2\\
&&{}-1.92279\times10^{-6} a^7 b^2
+3.27969\times10^{-8} a^8b^2\\
&&{}+4.39419\times10^{-6} a^5 b^3
-1.24474\times10^{-6} a^6 b^3 \\
&&{}+1.21202\times10^{-7} a^7b^3
-3.26456\times10^{-9} a^8 b^3\\
&&{}+5.74363\times10^{-6} a^6 b^4
-1.31226\times10^{-6} a^7b^4\\
&&{}+6.56898\times10^{-8} a^8 b^4
+1.3868\times10^{-8} a^7 b^5\\
&&{}-1.39803\times10^{-9} a^8
b^5+8.47574\times10^{-9} a^8 b^6.
\end{eqnarray*}

We create the data by numerically simulating $2\mbox{,}000$ paths of the
solution of (\ref{fBMexample}) for $h=\frac{11}{24}$, $a = 1$ and $b=2$
and discretization step $\delta= 10^{-3}$. We use a method proposed by
Davie that is the equivalent of Milstein's method for differential
equations driven by fractional Brownian motion (see \cite
{Deya-N-Tindel} and references within). The error is of order $\delta
^{3h-1}$, which for our choices of discretization step $\delta$ and
Hurst parameter $h$ is $0.075$.

Finally, we match the theoretical moments that are polynomials of
$(a,b)$ with the empirical moments and solve the system. As in the
diffusion case, we get two solutions corresponding to $b$ positive or
negative. Since fractional Brownian motion is mean zero Gaussian
process, we cannot expect to identify the sign of $b$.

We repeat the process $100$ times to get $100$ realizations of the
estimates. These are shown in Figure \ref{fignormalized} (left), after
normalization.


\subsection{Parameter estimation from one path}
As described in Section \ref{sectionextensions}, we can apply this
method on a single stationary path. We consider the fractional
Orstein--Uhlenbeck process:
%
\begin{equation}
\label{fOU}
dY_t = a(Y_t-b) \,dt+c\,dB^h_t, \qquad Y^{(1)}_0=Y_0.
\end{equation}

Through Picard iteration we compute the expansion of the two first
moments. If $B^h_t$ is a Brownian motion, we obtain the polynomials
\begin{eqnarray*}
{\mathbb E} (Y_t) &=&\mbox{Y0}-a b t-\tfrac{1}{2} a^2 b t^2-\tfrac{1}{6}
a^3 b t^3-\tfrac{1}{24} a^4 b t^4-\tfrac{1}{120} a^5 b t^5+\cdots,\\
{\mathbb E} (Y^2_t) &=&\mbox{Y0}^2+c^2 t+a^2 b^2 t^2+a c^2 t^2+a^3 b^2
t^3+\tfrac{2}{3} a^2 c^2 t^3+\tfrac{7}{12} a^4 b^2 t^4+\cdots.
\end{eqnarray*}
If $B^h_t$ is a fractional Brownian motion, the two moments also have
an analytic expression.

We use Davie's method (see \cite{Deya-N-Tindel}) to numerically
simulating one paths of the solution of (\ref{fOU}) for $h=\frac
{11}{24}$ and $h=\frac{1}{2}$, $a = -5, b=2, c=1$ and discretization
step $\delta= 10^{-2}$. The error is of order $\delta^{3h-1}$, which
for our choices of discretization step $\delta$ and Hurst parameter $h$
is $0.17$.

%
\begin{figure}

\includegraphics{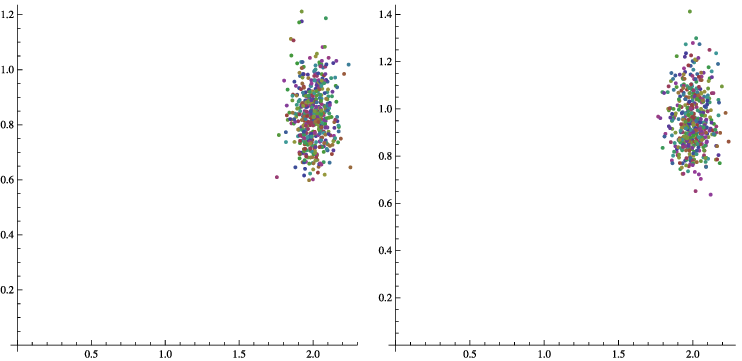}

\caption{500 realizations of the ESME for single paths. \textup{Left:}
from fractional O.U. paths (Hurst $\mbox{index}=11/24$),
\textup{right:} from O.U. paths.}\label{figonePath}
\end{figure}

Using the simulated data, we apply the method described in Section \ref
{sectionextensions} for $T=7$ and $S = 0.01$ in order to estimate $b$
and $c$. Figure \ref{figonePath} shows $500$ realizations of the
estimations of the mean $b$ and volatility $c$.

\printaddresses


\begin{thebibliography}{28}

\bibitem{Ait-SahaliaMykland}
%
\begin{barticle}[mr]
\bauthor{\bsnm{A{\"{\i}}t-Sahalia},~\bfnm{Yacine}\binits{Y.}} \AND
\bauthor{\bsnm{Mykland},~\bfnm{Per~A.}\binits{P.~A.}}
(\byear{2008}).
\btitle{An analysis of {H}ansen--{S}cheinkman moment estimators for discretely
and randomly sampled diffusions}.
\bjournal{J. Econometrics}
\bvolume{144}
\bpages{1--26}.
\bid{doi={10.1016/j.jeconom.2007.11.002}, issn={0304-4076}, mr={2439920}}
\end{barticle}
%
\endbibitem

\bibitem{Gareth}
%
\begin{barticle}[mr]
\bauthor{\bsnm{Beskos},~\bfnm{Alexandros}\binits{A.}},
\bauthor{\bsnm{Papaspiliopoulos},~\bfnm{Omiros}\binits{O.}} \AND
\bauthor{\bsnm{Roberts},~\bfnm{Gareth}\binits{G.}}
(\byear{2009}).
\btitle{Monte {C}arlo maximum likelihood estimation for discretely observed
diffusion processes}.
\bjournal{Ann. Statist.}
\bvolume{37}
\bpages{223--245}.
\bid{doi={10.1214/07-AOS550}, issn={0090-5364}, mr={2488350}}
\end{barticle}
%
\endbibitem

\bibitem{Bishwal}
%
\begin{bbook}[mr]
\bauthor{\bsnm{Bishwal},~\bfnm{Jaya P.~N.}\binits{J.~P.~N.}}
(\byear{2008}).
\btitle{Parameter Estimation in Stochastic Differential Equations}.
\bseries{Lecture Notes in Math.}
\bvolume{1923}.
\bpublisher{Springer}, \baddress{Berlin}.
\bid{doi={10.1007/978-3-540-74448-1}, mr={2360279}}
\end{bbook}
%
\endbibitem

\bibitem{Calderon}
%
\begin{barticle}[mr]
\bauthor{\bsnm{Calderon},~\bfnm{Christopher~P.}\binits{C.~P.}}
(\byear{2007}).
\btitle{Fitting effective diffusion models to data associated with a~``glassy''
potential: Estimation, classical inference procedures, and some heuristics}.
\bjournal{Multiscale Model. Simul.}
\bvolume{6}
\bpages{656--687 (electronic)}.
\bid{doi={10.1137/050643647}, issn={1540-3459}, mr={2338498}}
\end{barticle}
%
\endbibitem

\bibitem{Deya-N-Tindel}
%
\begin{bmisc}[auto:STB|2011-03-03|12:04:44]
\bauthor{\bsnm{Deya},~\bfnm{A.}\binits{A.}},
\bauthor{\bsnm{Neuenkirch},~\bfnm{A.}\binits{A.}} \AND
\bauthor{\bsnm{Tindel},~\bfnm{S.}\binits{S.}}
(\byear{2010}).
\bhowpublished{A Milstein-type scheme without L\'evy area terms for
SDEs driven
by fractional Brownian motion. Available at
\href{http://arxiv.org/abs/arXiv:1001.3344}{arXiv:1001.3344}}.
\end{bmisc}
%
\endbibitem


\bibitem{FrizVictoir}
%
\begin{bbook}[mr]
\bauthor{\bsnm{Friz},~\bfnm{Peter~K.}\binits{P.~K.}} \AND
\bauthor{\bsnm{Victoir},~\bfnm{Nicolas~B.}\binits{N.~B.}}
(\byear{2010}).
\btitle{Multidimensional Stochastic Processes as Rough Paths: Theory and Applications}.
\bseries{Cambridge Studies in Advanced Mathematics}
\bvolume{120}.
\bpublisher{Cambridge Univ. Press}, \baddress{Cambridge}.
\bid{mr={2604669}}
\end{bbook}
%
\endbibitem\vadjust{\goodbreak}

\bibitem{Gobet}
%
\begin{barticle}[mr]
\bauthor{\bsnm{Gobet},~\bfnm{Emmanuel}\binits{E.}},
\bauthor{\bsnm{Hoffmann},~\bfnm{Marc}\binits{M.}} \AND
\bauthor{\bsnm{Rei{\ss}},~\bfnm{Markus}\binits{M.}}
(\byear{2004}).
\btitle{Nonparametric estimation of scalar diffusions based on low frequency
data}.
\bjournal{Ann. Statist.}
\bvolume{32}
\bpages{2223--2253}.
\bid{doi={10.1214/009053604000000797}, issn={0090-5364}, mr={2102509}}
\end{barticle}
%
\endbibitem

\bibitem{Hairer}
%
\begin{bmisc}[auto:STB|2011-03-03|12:04:44]
\bauthor{\bsnm{Hairer},~\bfnm{M.}\binits{M.}} \AND
\bauthor{\bsnm{Pillai},~\bfnm{N.~S.}\binits{N.~S.}}
(\byear{2009}).
\bhowpublished{Ergodicity of hypoelliptic SDEs driven by fractional Brownian
motion. Available at
\href{http://arxiv.org/abs/arXiv:0909.4505}{arXiv:0909.4505}}.
\end{bmisc}
%
\endbibitem

\bibitem{Hansen}
%
\begin{barticle}[mr]
\bauthor{\bsnm{Hansen},~\bfnm{Lars~Peter}\binits{L.~P.}}
(\byear{1982}).
\btitle{Large sample properties of generalized method of moments estimators}.
\bjournal{Econometrica}
\bvolume{50}
\bpages{1029--1054}.
\bid{doi={10.2307/1912775}, issn={0012-9682}, mr={0666123}}
\end{barticle}
%
\endbibitem

\bibitem{Hansen-Scheinkman}
%
\begin{barticle}[mr]
\bauthor{\bsnm{Hansen},~\bfnm{Lars~Peter}\binits{L.~P.}} \AND
\bauthor{\bsnm{Scheinkman},~\bfnm{Jos{\'e}~Alexandre}\binits{J.~A.}}
(\byear{1995}).
\btitle{Back to the future: Generating moment implications for continuous-time
{M}arkov processes}.
\bjournal{Econometrica}
\bvolume{63}
\bpages{767--804}.
\bid{doi={10.2307/2171800}, issn={0012-9682}, mr={1343081}}
\end{barticle}
%
\endbibitem

\bibitem{OUfBM}
%
\begin{barticle}[mr]
\bauthor{\bsnm{Hu},~\bfnm{Yaozhong}\binits{Y.}} \AND
\bauthor{\bsnm{Nualart},~\bfnm{David}\binits{D.}}
(\byear{2010}).
\btitle{Parameter estimation for fractional {O}rnstein--{U}hlenbeck processes}.
\bjournal{Statist. Probab. Lett.}
\bvolume{80}
\bpages{1030--1038}.
\bid{doi={10.1016/j.spl.2010.02.018}, issn={0167-7152}, mr={2638974}}
\bptnote{check year}%
\end{barticle}
%
\endbibitem

\bibitem{Hult}
%
\begin{barticle}[mr]
\bauthor{\bsnm{Hult},~\bfnm{Henrik}\binits{H.}}
(\byear{2003}).
\btitle{Approximating some {V}olterra type stochastic integrals with
applications to parameter estimation}.
\bjournal{Stochastic Process. Appl.}
\bvolume{105}
\bpages{1--32}.
\bid{doi={10.1016/S0304-4149(02)00250-8}, issn={0304-4149}, mr={1972287}}
\end{barticle}
%
\endbibitem

\bibitem{Kloeden-Platen}
%
\begin{bbook}[mr]
\bauthor{\bsnm{Kloeden},~\bfnm{Peter~E.}\binits{P.~E.}} \AND
\bauthor{\bsnm{Platen},~\bfnm{Eckhard}\binits{E.}}
(\byear{1992}).
\btitle{Numerical Solution of Stochastic Differential Equations}.
\bseries{Applications of Mathematics (New York)}
\bvolume{23}.
\bpublisher{Springer}, \baddress{Berlin}.
\bid{mr={1214374}}
\bptnote{check year}%
\end{bbook}
%
\endbibitem

\bibitem{Sorensen}
%
\begin{bbook}[mr]
\bauthor{\bsnm{K{\"u}chler},~\bfnm{Uwe}\binits{U.}} \AND
\bauthor{\bsnm{S{\o}rensen},~\bfnm{Michael}\binits{M.}}
(\byear{1997}).
\btitle{Exponential Families of Stochastic Processes}.
\bpublisher{Springer}, \baddress{New York}.
\bid{mr={1458891}}
\end{bbook}
%
\endbibitem

\bibitem{Kutoyants}
%
\begin{bbook}[mr]
\bauthor{\bsnm{Kutoyants},~\bfnm{Yury~A.}\binits{Y.~A.}}
(\byear{2004}).
\btitle{Statistical Inference for Ergodic Diffusion Processes}.
\bpublisher{Springer}, \baddress{London}.
\bid{mr={2144185}}
\end{bbook}
%
\endbibitem

\bibitem{LadroueExpectation}
%
\begin{bmisc}[auto:STB|2011-03-03|12:04:44]
\bauthor{\bsnm{Ladroue},~\bfnm{C.}\binits{C.}}
(\byear{2010}).
\bhowpublished{Expectation of Stratonovich iterated integrals of Wiener
processes. Available at
\href{http://arxiv.org/abs/arXiv:1008.4033}{arXiv:1008.4033}}.
\end{bmisc}
%
\endbibitem

\bibitem{Kevrekidis}
%
\begin{barticle}[mr]
\bauthor{\bsnm{Li},~\bfnm{Ju}\binits{J.}},
\bauthor{\bsnm{Kevrekidis},~\bfnm{Panayotis~G.}\binits{P.~G.}},
\bauthor{\bsnm{Gear},~\bfnm{C.~William}\binits{C.~W.}} \AND
\bauthor{\bsnm{Kevrekidis},~\bfnm{Ioannis~G.}\binits{I.~G.}}
(\byear{2007}).
\btitle{Deciding the nature of the coarse equation through microscopic
simulations: The baby-bathwater scheme}.
\bjournal{SIAM Rev.}
\bvolume{49}
\bpages{469--487 (electronic)}.
\bid{doi={10.1137/070692303}, issn={0036-1445}, mr={2353808}}
\end{barticle}
%
\endbibitem

\bibitem{Terrybook}
%
\begin{bbook}[mr]
\bauthor{\bsnm{Lyons},~\bfnm{Terry}\binits{T.}} \AND
\bauthor{\bsnm{Qian},~\bfnm{Zhongmin}\binits{Z.}}
(\byear{2002}).
\btitle{System Control and Rough Paths}.
\bpublisher{Oxford Univ. Press}, \baddress{Oxford}.
\bid{doi={10.1093/acprof:oso/9780198506485.001.0001}, mr={2036784}}
\end{bbook}
%
\endbibitem

\bibitem{Lyons2007}
%
\begin{bbook}[mr]
\bauthor{\bsnm{Lyons},~\bfnm{Terry~J.}\binits{T.~J.}},
\bauthor{\bsnm{Caruana},~\bfnm{Michael}\binits{M.}} \AND
\bauthor{\bsnm{L{\'e}vy},~\bfnm{Thierry}\binits{T.}}
(\byear{2007}).
\btitle{Differential Equations Driven by Rough Paths}.
\bseries{Lecture Notes in Math.}
\bvolume{1908}.
\bpublisher{Springer}, \baddress{Berlin}.
\bid{mr={2314753}}
\end{bbook}
%
\endbibitem

\bibitem{Tindel}
%
\begin{barticle}[mr]
\bauthor{\bsnm{Neuenkirch},~\bfnm{A.}\binits{A.}},
\bauthor{\bsnm{Nourdin},~\bfnm{I.}\binits{I.}} \AND
\bauthor{\bsnm{Tindel},~\bfnm{S.}\binits{S.}}
(\byear{2008}).
\btitle{Delay equations driven by rough paths}.
\bjournal{Electron. J. Probab.}
\bvolume{13}
\bpages{2031--2068}.
\bid{issn={1083-6489}, mr={2453555}}
\end{barticle}
%
\endbibitem

\bibitem{me}
%
\begin{bincollection}[auto:STB|2011-03-03|12:04:44]
\bauthor{\bsnm{Papavasiliou},~\bfnm{A.}\binits{A.}}
(\byear{2010}).
\btitle{Coarse-grained modelling of multiscale diffusions: The
$p$-variation estimates}. 
In \bbooktitle{Stochastic Analysis 2010}
(\beditor{D. Crisan}, ed.).
\bpublisher{Springer}, \baddress{Berlin}.
\end{bincollection}
%
\endbibitem

\bibitem{Rao}
%
\begin{bincollection}[auto:STB|2011-03-03|12:04:44]
\bauthor{\bsnm{Prakasa~Rao},~\bfnm{B.~L.~S.}\binits{B.~L.~S.}}
(\byear{1999}).
\btitle{Statistical inference for diffusion type processes}.
In \bbooktitle{Kendall's Library of Statistics}
\bvolume{8}.
\bpublisher{Oxford Univ. Press}, \baddress{New York}.
\bid{mr={1717690}}
\end{bincollection}
%
\endbibitem

\bibitem{Reiss}
%
\begin{barticle}[mr]
\bauthor{\bsnm{Reiss},~\bfnm{Markus}\binits{M.}}
(\byear{2005}).
\btitle{Adaptive estimation for affine stochastic delay differential
equations}.
\bjournal{Bernoulli}
\bvolume{11}
\bpages{67--102}.
\bid{doi={10.3150/bj/1110228243}, issn={1350-7265}, mr={2121456}}
\end{barticle}
%
\endbibitem


\bibitem{ViensTudor}
%
\begin{barticle}[mr]
\bauthor{\bsnm{Tudor},~\bfnm{Ciprian~A.}\binits{C.~A.}} \AND
\bauthor{\bsnm{Viens},~\bfnm{Frederi~G.}\binits{F.~G.}}
(\byear{2007}).
\btitle{Statistical aspects of the fractional stochastic calculus}.
\bjournal{Ann. Statist.}
\bvolume{35}
\bpages{1183--1212}.
\bid{doi={10.1214/009053606000001541}, issn={0090-5364}, mr={2341703}}
\end{barticle}
%
\endbibitem

\bibitem{Unterberger}
%
\begin{barticle}[mr]
\bauthor{\bsnm{Unterberger},~\bfnm{J{\'e}r{\'e}mie}\binits{J.}}
(\byear{2009}).
\btitle{Stochastic calculus for fractional {B}rownian motion with {H}urst
exponent $H>\frac{1}{4}$: A rough path method by analytic extension}.
\bjournal{Ann. Probab.}
\bvolume{37}
\bpages{565--614}.
\bid{doi={10.1214/08-AOP413}, issn={0091-1798}, mr={2510017}}
\end{barticle}
%
\endbibitem

\end{thebibliography}
\end{document}